\theoremstyle{plain}
  \newtheorem{theorem}{Theorem}[section]
  \newtheorem{lemma}{Lemma}[section]
  \newtheorem{corollary}{Corollary}[section]
   \newcommand{\beqn}{\begin{eqnarray}}
   \newcommand{\eeqn}{\end{eqnarray}}
   \newcommand{\beqs}{\begin{eqnarray*}}
   \newcommand{\eeqs}{\end{eqnarray*}}
   \newcommand{\ban}{\begin{eqnarray*}}
   \newcommand{\nan}{\end{eqnarray*}}
   \newcommand{\beq}{\begin{equation}}
   \newcommand{\eeq}{\end{equation}}
  \newcommand{\RR}{{\mathbb R}}
  \newcommand{\R}{\RR}
\newcommand{\p}{\partial}
\newcommand{\Om}{\Omega}
\newcommand{\pom}{{\p\Om}}
\newcommand{\bom}{{\overline\Om}}
\renewcommand{\det}{\mbox{det}}
  \newcommand{\dist}{\mbox{dist}}
\def\rhoprime{\rho}
  \numberwithin{equation}{section}
  \numberwithin{figure}{section}
\begin{document}

\title[Regularity for the optimal transportation]
{Global regularity of optimal mappings \\ in non-convex domains}

\date\today

\author[S. Chen]
{Shibing Chen}
\address
{School of Mathematical Sciences,
University of Science and Technology of China,
Hefei, 230026, P.R. China}
\email{chenshib@ustc.edu.cn}

\author[J. Liu]
{Jiakun Liu}
\address
	{School of Mathematics and Applied Statistics,
	University of Wollongong,
	Wollongong, NSW 2522, AUSTRALIA}
\email{jiakunl@uow.edu.au}

\author[X.-J. Wang]
{Xu-Jia Wang}
\address
{Centre for Mathematics and Its Applications,
The Australian National University,
Canberra, ACT 0200, AUSTRALIA}
\email{Xu-Jia.Wang@anu.edu.au}

\thanks{This work was supported by ARC FL130100118 and ARC DP170100929.}

\subjclass[2000]{35J96, 35J25, 35B65.}

\keywords{Monge-Amp\`ere equation, global regularity.}

\begin{abstract}
In this paper, 
we establish a global regularity result for the optimal transport problem with the quadratic cost,
where the domains may not be convex. 
This result is obtained by a perturbation argument,
using a recent global regularity of optimal transportation in convex domains by the authors.
\end{abstract}

\maketitle

\baselineskip=16.2pt
\parskip=3pt

\section{Introduction}

The regularity of optimal mappings is a core issue in optimal transport problem \cite{Ca96,V2}, which can be described as follows:
Suppose there is a source domain $\Om\subset \mathbb{R}^n$ with density $f$ and a target domain $\Omega^*\subset \mathbb{R}^n$ with density $g$ satisfying the balance condition
\beq\label{BC}
\int_\Om f=\int_{\Om^*}g.
\eeq
Given a cost function $c(x,y): \Om\times \Om^*\rightarrow \mathbb{R}$,
one asks for the existence and regularity of an optimal mapping $T$ that minimises the transport cost 
\begin{equation}\label{ot}
\mathcal C(T) = \int_{\Om} c(x, T(x)) f(x)dx
\end{equation}
among all measure preserving maps.
A mapping $T : \Om\to\Om^*$ is called measure preserving, denoted as $T_\sharp f=g$, if for any Borel set $E\subset\Om^*$
$$\int_{T^{-1}(E)} f=\int_E g.$$

The optimal transport problem was first introduced by Monge \cite {Mo} with the natural cost function $c(x, y)=|x-y|$, and was extensively studied since after.
When the cost function 
\beq\label{xdoty}
c(x, y)=x\cdot y,
\eeq
or equivalently the quadratic cost $c(x, y)=\frac12|x-y|^2$,
the existence and uniqueness of the optimal mapping were obtained by Brenier \cite{Bre91}.
It was shown that the optimal mapping $T=Du$ is the gradient of a convex potential function $u$, which satisfies
\begin{equation}\label{ma1}
	\det\, D^2u(x) = \frac{f(x)}{g(Du(x))}, 
\end{equation}
\begin{equation}\label{bdry}
		Du(\Omega) = \Omega^*.
\end{equation}

In this paper we study the regularity of solutions to the above boundary value problem.
The densities $f, g$ are always assumed to satisfy \eqref{BC} and
\beq\label{fg}
c_0\le f, g\le c_1
\eeq
for two positive constants $c_1\ge c_0>0$, which makes the equation \eqref{ma1} elliptic. 

Due to its applications in optimal transportation and in many other areas, 
the boundary value problem \eqref{ma1}--\eqref{bdry} has received huge attention
and been studied intensively in recent years \cite {Eva, V2}.
Assuming both domains $\Om, \Om^*$ are convex Pogorelov \cite{P64} obtained a generalised solution in the sense of Aleksandrov. 
In \cite{Bre91}, Brenier showed the existence and uniqueness of solutions in another weak sense, which is equivalent to Aleksandrov's solution when $f, g$ satisfy \eqref{fg} and the target domain is convex. But we also refer the reader to \cite{Bak} for extension of Aleksandrov's generalised solutions. 
The interior regularity was developed by many people, see for example \cite{C1,Ca92,DFS,JW,TW08}, the books \cite{F,GT,Gut} and references therein. 
Very recently, a new proof was found in \cite {WW}, using the Green function of the linearised Monge-Amp\`ere equation.

For the global regularity, assuming both domains $\Om, \Om^*$ are uniformly convex and $C^{3,1}$ smooth, and the densities $f\in C^{1,1}(\bom)$, $g\in C^{1,1}(\overline{\Omega^*})$, the global smooth solution was first obtained by Delano\"e \cite{D91} for dimension two and later extended to high dimensions by Urbas \cite{U1}.
In a milestone work \cite{C96}, Caffarelli proved that $u\in C^{2,\alpha'}(\bom)$ for some $\alpha'\in (0, \alpha)$, if $\Om, \Om^*$ are uniformly convex with $C^2$ boundary, and the densities $f, g\in C^\alpha$. 
The uniform convexity of domains plays a critical role in the above mentioned papers \cite{C96, D91, U1}, which is also necessary for the global regularity of solutions to other boundary value problems such as in \cite{LTU, Sa, TW08a}.
In a recent paper \cite{CLW}, the authors removed this condition for the problem  \eqref{ma1}--\eqref{bdry} and obtained the following

\begin{theorem}[\cite{CLW}]\label{main}
Assume that $\Omega$ and $\Omega^*$ are bounded convex domains in $\mathbb{R}^n$ 
with $C^{1,1}$ boundaries.
Let $u$ be a convex solution to  \eqref{ma1}--\eqref{bdry}. We have the following estimates:
\begin{itemize}
\item[(i)] If $f\in C^\alpha(\bom)$, $g\in  C^\alpha(\overline{\Om^*})$, for some $\alpha\in(0,1)$, then
\beq\label{esti}
\|u\|_{C^{2,\alpha}(\bom)}\le C,
\eeq
where $C$ is a constant depending on $n, \alpha, f, g, \Om$, and $\Om^*$.
\item[(ii)]  If $f\in C^0(\bom)$, $g\in  C^0(\overline{\Om^*})$, then 
\beq\label{esti1}
\|u\|_{C^{1,\beta}(\bom)}\le C_\beta \quad \forall\, \beta\in(0,1),\quad\mbox{ and }\ \ \|u\|_{W^{2,p}(\bom)}\le C_p \quad \forall\, p\geq1,
\eeq
where the constants $C_\beta, C_p$ depend on $n, f, g, \Om, \Om^*$, and on $\beta, p$, respectively.
\end{itemize}
\end{theorem}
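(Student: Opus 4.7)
The plan is to prove Theorem~\ref{main} by approximating the merely convex $C^{1,1}$ domains $\Om$ and $\Om^*$ by smooth uniformly convex ones $\Om_\eps$ and $\Om_\eps^*$, for which Caffarelli's classical global $C^{2,\alpha'}$ estimate \cite{C96} applies, and then extracting bounds on $u_\eps$ that are \emph{uniform} in $\eps$ so that the limit inherits the regularity of $u$. Concretely, I would choose $\Om_\eps\supset\Om$ and $\Om_\eps^*\supset\Om^*$ converging to $\Om,\Om^*$ in Hausdorff distance, with principal curvatures bounded below by a parameter $\kappa_\eps\searrow 0$, and extend $f,g$ to densities $f_\eps,g_\eps$ on $\Om_\eps,\Om_\eps^*$ that preserve the balance condition \eqref{BC}, the bounds \eqref{fg}, and the same order of regularity (Hölder in case (i) or merely continuous in case (ii)). Caffarelli's theorem then yields smooth solutions $u_\eps$.

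Interior estimates on $u_\eps$ are uniform in $\eps$ since $u_\eps$ has bounded oscillation and the densities are controlled by \eqref{fg}; the substantive work is at the boundary. I would proceed in two sub-steps. First, I would establish a \emph{uniform strict convexity} of boundary sections: for every $x_0\in\partial\Om_\eps$ and small $h>0$, the Aleksandrov section $S_h(u_\eps,x_0)=\{x:u_\eps(x)<u_\eps(x_0)+Du_\eps(x_0)\cdot(x-x_0)+h\}$ is, after an affine normalization, comparable to a half-ball with eccentricity controlled independently of $\eps$. This would be proved by a contradiction/compactness argument: a sequence of maps whose sections had blowing-up eccentricity would, after affine rescaling, subconverge to a Brenier solution between two convex bodies possessing a degenerate (lower-dimensional) section, which one then rules out using the $C^{1,1}$ regularity of $\partial\Om$ and $\partial\Om^*$ together with the balance constraint. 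Second, this section estimate would yield a uniform strict obliqueness for $Du_\eps$ on $\partial\Om_\eps$, after which Caffarelli's boundary $C^{2,\alpha}$ scheme applies with constants independent of $\eps$.

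The hard step is the uniform strict convexity at points of flat boundary: in \cite{C96} one constructs a sub-barrier exploiting the strict convexity of $\Om^*$, and without that tool one must instead use the $C^{1,1}$ regularity of $\partial\Om^*$ in a quantitative way, coupled with the affine invariance of the Monge--Amp\`ere equation \eqref{ma1}, to drive the compactness argument through. Once uniform $C^{2,\alpha}$ estimates are in hand, a subsequence $u_\eps$ converges in $C^{2,\alpha''}(\bom)$ for any $\alpha''<\alpha$, and the limit is identified with $u$ by uniqueness (up to additive constants) of the Brenier potential; this proves (i). For (ii), one mollifies $f,g$ into Hölder densities, applies (i), and passes to the limit using that Caffarelli's $C^{1,\alpha}$ and $W^{2,p}$ theory (once the boundary behavior just established is available) depends only on the uniform bounds in \eqref{fg} and not on the modulus of continuity of $f,g$.
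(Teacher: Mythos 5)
First, a point of order: Theorem \ref{main} is not proved in this paper at all --- it is imported from \cite{CLW} and used as a black box --- so the only fair comparison is with the method of \cite{CLW}, whose central mechanism is reproduced in \S3 here (Lemma \ref{diff1} and the ensuing induction). That method is \emph{not} an approximation by uniformly convex domains. It compares $u$ on a dyadic boundary section with the solution $w$ of $\det D^2w=1$, $w=h$ on $\partial D_h$, where $D_h$ is obtained by truncating the section at $\{x_n=h^{1-3\eps}\}$ and \emph{reflecting} across that hyperplane; the doubled domain $D_h$ is convex, the boundary point becomes effectively interior for $w$, so interior Pogorelov--Schauder estimates give $\|w\|_{C^3}\le C$ with no convexity modulus entering, and the estimate $|u-w|\lesssim h^{1+\tau}$ together with the convergent sum $\sum_i h_i^{\tau}$ over $h_i=4^{-i}$ yields good shape of all sections, hence $C^{1,1}$ and then $C^{2,\alpha}$. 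Uniform convexity is never used; it is circumvented, not approximated.

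Your plan, by contrast, concentrates the entire difficulty into the assertion that ``Caffarelli's boundary $C^{2,\alpha}$ scheme applies with constants independent of $\eps$,'' and that is precisely what fails. The barriers in \cite{C96} that give quadratic separation of $u_\eps$ from its tangent planes at boundary points, and the proof of strict obliqueness, use the lower curvature bound $\kappa_\eps$ of $\partial\Om^*_\eps$ quantitatively, and the resulting constants blow up as $\kappa_\eps\to0$; you offer no replacement for the barrier at a flat boundary point (the reflection construction above is exactly that replacement). The proposed compactness argument for uniform eccentricity of sections is also circular: affine renormalizations of a sequence of sections with blowing-up eccentricity need not converge to anything related to $\Om,\Om^*$ (they converge to possibly degenerate convex sets), and identifying the limit already requires the quantitative section estimates being sought; moreover, even granting bounded eccentricity, a soft compactness statement cannot produce the scale-by-scale decay needed for \eqref{esti}. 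Finally, your reduction of (ii) to (i) by mollification is incorrect as stated: the constants $C_\beta$ for $\beta$ close to $1$ and $C_p$ for large $p$ genuinely depend on the moduli of continuity of $f,g$, not only on the bounds \eqref{fg} (with merely bounded densities one only gets $W^{2,1+\eps}$ and a small H\"older exponent), so one cannot pass to the limit from mollified H\"older densities ``using only \eqref{fg}.''
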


In this paper, we relax furthermore the convexity condition of the domains.
For any given non-convex domain $\Omega^*$, it was shown that there exist smooth and positive densities $f, g$ such that the potential function $u$ is not $C^1$ \cite{MTW}.
However, for fixed positive and smooth densities $f, g$, by Theorem \ref{main} and a perturbation argument, in this paper we can show that $u$ is smooth up to the boundary when the domains are small perturbations of convex domains, but may not be convex themselves.

Given a bounded domain $\Lambda\subset\R^n$, we say $\Lambda$ is \emph{$\delta$-close} to $\Om$ in $C^{1,1}$ norm, if there exists a bijective mapping $\Phi : \Om\to\Lambda$ such that $\Phi\in C^{1,1}(\bom)$ and
\beq\label{deldis}
\|\Phi-I\|_{C^{1,1}(\bom)} \leq \delta
\eeq 
where $I:\Om\to\Om$ is the identity mapping. 
A localised definition of $\delta$-closeness is given in \S2.1.
Now we can state our main theorem.

\begin{theorem}\label{ptheorem}
Let $\Lambda$  and $\Lambda^*$ be $C^{1,1}$ domains that are
$\delta$-close to $\Omega$  and $\Omega^*$ in $C^{1,1}$ norm, respectively,
where $\Om$ and $\Om^*$ are bounded convex domains with $C^{1,1}$ boundaries. 
Suppose that $f, g$ satisfy \eqref{BC} and \eqref{fg}, and $f\in C^{\alpha}(\overline{\Lambda}),g\in C^{\alpha}(\overline{\Lambda^*})$,
for some $\alpha\in (0,1)$.
Then there exists a small constant $\delta_0>0$ depending only
on $\Omega, \Omega^*, \alpha, c_0, c_1, \|f\|_{C^{\alpha}(\Lambda)}$ and $\|g\|_{C^{\alpha}(\Lambda^*)}$,
such that the potential function $u\in C^{2,\alpha}(\overline{\Lambda}) $,  provided $\delta<\delta_0.$
\end{theorem}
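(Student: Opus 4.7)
The plan is a perturbation argument from Theorem~\ref{main}, treating $(\Lambda,\Lambda^*,f,g)$ as a small deformation of a convex reference problem and then locally matching the unknown solution $u$ to a convex-domain model near each boundary point. Let $u$ be the Brenier potential of $(f\chi_\Lambda,g\chi_{\Lambda^*})$; it is a globally convex function on $\mathbb{R}^n$, satisfies \eqref{ma1} a.e.\ on $\Lambda$ with $Du(\Lambda)=\Lambda^*$, and the task is to promote this to $C^{2,\alpha}(\overline{\Lambda})$ when $\delta$ is small.

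First I would construct a reference solution $u_0$ on the convex pair $(\Omega,\Omega^*)$. Using the diffeomorphisms $\Phi,\Psi$ from \eqref{deldis}, define densities $f_0,g_0$ on $\Omega,\Omega^*$ equal to $f,g$ on $\Lambda\cap\Omega$, $\Lambda^*\cap\Omega^*$ and modified on the $O(\delta)$-neighborhoods of the boundaries so as to preserve the balance condition \eqref{BC} and the bounds \eqref{fg} (with slightly perturbed constants) and so as to remain $C^\alpha$. Theorem~\ref{main}(i) then produces a convex potential $u_0\in C^{2,\alpha}(\overline{\Omega})$ whose Hessian has eigenvalues bounded above and below uniformly as $\delta\to 0$. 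A Wasserstein-stability argument, using that $\Phi,\Psi$ are $O(\delta)$-close to the identity and that $f_0,g_0$ differ from $f,g$ only on an $O(\delta)$-set, gives $\|u-u_0\|_{L^\infty}=O(\delta)$ after additive normalization. Combined with Theorem~\ref{main}(ii) applied to $u_0$, this yields a priori bounds $u\in C^{1,\beta}(\overline\Lambda)\cap W^{2,p}(\overline\Lambda)$, strict convexity of $u$, and the fact that $Du:\overline\Lambda\to\overline{\Lambda^*}$ is a homeomorphism sending $\partial\Lambda$ onto $\partial\Lambda^*$.

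For the boundary $C^{2,\alpha}$ estimate, fix $x_0\in\partial\Lambda$ and set $y_0=Du(x_0)\in\partial\Lambda^*$. Replace $\Lambda,\Lambda^*$ near $x_0,y_0$ by convex localizations $\widehat\Omega,\widehat\Omega^*$ that agree with $\Lambda,\Lambda^*$ outside small neighborhoods of $x_0,y_0$ and are (uniformly) convex near them; the replacement is $O(\delta)$ in $C^{1,1}$. Extend $f,g$ across the modified boundary pieces preserving mass, \eqref{fg}, and $C^\alpha$ regularity, and let $\widehat u$ be the Brenier potential of the modified problem. By Theorem~\ref{main}, $\widehat u\in C^{2,\alpha}(\overline{\widehat\Omega})$ with bounds uniform in $\delta$. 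Writing $w=u-\widehat u$ and subtracting the two Monge--Amp\`ere equations, $w$ satisfies a uniformly elliptic linear PDE (whose coefficients are obtained by integrating the cofactor matrices of $tD^2 u+(1-t)D^2\widehat u$ over $t\in[0,1]$, and lie in $C^\alpha$) with right-hand side and oblique boundary data both $O(\delta)$; boundary Schauder estimates then give $\|w\|_{C^{2,\alpha}}=O(\delta)$ in a neighborhood of $x_0$. Hence $u=\widehat u+w\in C^{2,\alpha}$ near $x_0$, and a finite cover of $\partial\Lambda$ together with Caffarelli's interior $C^{2,\alpha}$ theorem completes the proof.

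The main obstacle is the comparison step: optimal transport is nonlocal, so a local convex modification of $\Lambda,\Lambda^*$ can perturb $\widehat u$ globally, and one must show that $\|u-\widehat u\|_{L^\infty}$ is truly $O(\delta)$ near $x_0$ and that the boundary condition $D\widehat u(\partial\widehat\Omega)=\partial\widehat\Omega^*$ assigns the modified piece of $\partial\widehat\Omega$ to the modified piece of $\partial\widehat\Omega^*$ (rather than redistributing mass globally). Controlling this global response in terms of $\delta$, so that the linearized equation for $w$ really has small data near $x_0$, is the delicate point and is where the smallness of $\delta$ and the uniform $C^{2,\alpha}$ control provided by Theorem~\ref{main} are both essential.
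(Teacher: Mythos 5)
There are two genuine gaps in your proposal, and both occur at the points where the paper has to do real work. First, you pass from the $L^\infty$ comparison $\|u-u_0\|_{L^\infty}=O(\delta)$ directly to ``a priori bounds $u\in C^{1,\beta}\cap W^{2,p}$, strict convexity, and $Du$ a boundary homeomorphism.'' Mere $L^\infty$ closeness of $u$ to a regular convex function gives none of this: a convex function can be uniformly close to $\frac12|x|^2$ and still fail to be $C^1$ at a boundary point. Upgrading the $L^\infty$ closeness to $C^{1,\beta}(\overline\Lambda)$ is the content of the paper's entire Section 2: one localizes at a boundary point, rescales so the densities become nearly constant and the boundaries nearly flat, approximates $u$ by a \emph{smooth} solution $w$ of $\det D^2w=1$ on symmetric convex domains built by reflection across a hyperplane, and then iterates the resulting affine normalization ($u_{k+1}(x)=h_0^{-1}u_k(\sqrt{h_0}A_kx)$) to control the sub-level sets $S_{h_0^k}[u]$ at every scale. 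Without this (or an equivalent) iteration, the claimed $C^{1,\beta}$ input to your second step is unsupported. (Separately, your $O(\delta)$ rate from ``Wasserstein stability'' is stronger than what the paper obtains — Lemma \ref{vclose} only yields a qualitative modulus $\omega(\delta)\to0$ by compactness — but that by itself would not be fatal.)

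Second, your boundary $C^{2,\alpha}$ step is circular. Writing $w=u-\widehat u$ and ``subtracting the two Monge--Amp\`ere equations'' produces a linear equation whose coefficients are $\int_0^1\mathrm{cof}\bigl(tD^2u+(1-t)D^2\widehat u\bigr)\,dt$; to assert that this operator is uniformly elliptic with $C^\alpha$ coefficients up to the boundary you need precisely the bounds $0<\lambda I\le D^2u\le\Lambda I$ and $D^2u\in C^\alpha(\overline\Lambda)$ that you are trying to prove. Moreover the condition $Du(\Lambda)=\Lambda^*$ is a fully nonlinear oblique boundary condition, and its linearization (and the obliqueness constant) again involves $D^2u$ at the boundary. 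The paper avoids this by never linearizing around $u$: it compares $u$ in the reflected convex sets $D_h=D_h^+\cup D_h^-$ with solutions of $\det D^2w=1$, $w=h$ on $\partial D_h$, using explicit barriers $\hat w,\check w$ and the comparison principle (Lemma \ref{diff1}) to get $\|u-w\|_{L^\infty}\lesssim h^{1+\tau}$, and then runs Caffarelli's dyadic induction $h_k=4^{-k}$ on the ``good shape'' of the sections, summing $|D^2u_i-D^2u_{i+1}|\lesssim h_i^\tau$ to bound $D^2u$ and obtain $C^{2,\alpha}$. Your own closing remark correctly identifies the nonlocality of the comparison as delicate, but the decisive obstruction is that the Schauder step presupposes its conclusion; some version of the sub-level-set comparison and scale-by-scale induction appears unavoidable here.
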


The proof of Theorem \ref{ptheorem} is carried out in the following two sections: 
In \S2, we prove that $u\in C^{1,\beta}(\overline\Lambda)$ for any given $\beta\in (0,1)$, by using a localisation and iteration argument.
Then in \S3, by adapting a perturbation argument from \cite[\S5]{CLW}, we obtain $u\in C^{2,\alpha}(\overline{\Lambda})$.
As a byproduct we also obtain the following global $W^{2,p}$ estimate. 

\begin{theorem}\label{2ptheorem}
Let $\Lambda, \Lambda^*$ be as in Theorem \ref{ptheorem}.
Suppose $f, g$ satisfy \eqref{BC} and \eqref{fg}, and  $f\in C(\overline{\Lambda}),g\in C(\overline{\Lambda^*})$.
Then $\forall\ p\ge 1$, $\exists$ a small constant $\delta_0>0$ depending only
on $\Omega, \Omega^*, p, c_0, c_1, f, g $,
such that the potential function $u\in W^{2,p}(\overline\Lambda) $,  provided $\delta<\delta_0.$
\end{theorem}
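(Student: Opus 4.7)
My plan is to mirror the two-step structure used for Theorem~\ref{ptheorem}, replacing the Schauder estimate \eqref{esti} of Theorem~\ref{main} by the $W^{2,p}$ estimate \eqref{esti1} at the appropriate place. First I would establish $u\in C^{1,\beta}(\overline\Lambda)$ for every $\beta\in(0,1)$; then, with $u$ uniformly strictly convex and its sections well controlled, I would run a perturbation argument against a nearby convex problem to transfer the global $W^{2,p}$ bound of Theorem~\ref{main}(ii) to $u$.

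\textbf{Step one: $C^{1,\beta}$ regularity.} The localisation and iteration argument in \S2 that delivers $u\in C^{1,\beta}(\overline\Lambda)$ under the hypotheses of Theorem~\ref{ptheorem} never actually uses H\"older continuity of $f$ or $g$; it uses only that the Monge-Amp\`ere measure $\det D^2 u=f/g(Du)$ lies between two positive constants, the convexity of $\Om$ and $\Om^*$, and the closeness condition \eqref{deldis}. Hence under the weaker hypothesis $f,g\in C^0$ with \eqref{BC}--\eqref{fg}, the same proof yields $\|u\|_{C^{1,\beta}(\overline\Lambda)}\le C_\beta$ for every $\beta\in(0,1)$, provided $\delta<\delta_0(\Om,\Om^*,c_0,c_1,\beta)$. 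In particular this delivers the uniform strict convexity of $u$ and the engulfing property of its sections needed below.

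\textbf{Step two: $W^{2,p}$ regularity via perturbation.} Fix $p\ge 1$. The interior $W^{2,p}$ bound is standard once Step one is in hand, since $f,g$ are uniformly continuous and the Monge-Amp\`ere measure is close to a multiple of Lebesgue on small sections. Near a boundary point $x_0\in\p\Lambda$, I would use the $C^{1,\beta}$ estimate to normalise a section of $u$ at $x_0$, construct a convex comparison pair $(\wtt\Om,\wtt\Om^*)$ that is $O(\delta)$-close to $(\Lambda,\Lambda^*)$ in a neighbourhood of $x_0$, extend the densities to $\wtt f,\wtt g\in C^0$ satisfying \eqref{BC} and \eqref{fg} on the new pair, and let $\wtt u$ be the corresponding convex potential. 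Theorem~\ref{main}(ii) then yields $\|\wtt u\|_{W^{2,p}(\overline{\wtt\Om})}\le C_p$. Comparing $u$ and $\wtt u$, their difference satisfies a linearised Monge-Amp\`ere equation whose coefficient matrix is controlled by Step one and whose data differ by $O(\delta)$; applying the standard $W^{2,p}$ theory for such linearised equations on a finite covering of $\overline\Lambda$ by normalised sections transfers the bound to $u$.

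\textbf{Main obstacle.} The principal technical point, as in \S3 of the current paper and in \cite{CLW}, is that the $W^{2,p}$ norm does not scale as cleanly as $C^{2,\alpha}$ under the affine renormalisations used inside sections, so $\delta_0$ must be chosen depending on $p$: small enough that, on every section appearing in the covering, the $L^p$-oscillation of the cofactor matrix of $D^2 u$ and the boundary perturbation both sit below the threshold the perturbative $W^{2,p}$ argument can absorb. Once that quantitative balance is struck---mirroring the corresponding choice made for the Schauder case in \S3---the remaining details are a routine adaptation.
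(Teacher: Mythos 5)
There is a genuine gap in your Step one. You assert that the localisation--iteration argument of \S2 ``never actually uses H\"older continuity of $f$ or $g$,'' but it does, in an essential way: the very first step (\S2.1--\S2.2) invokes Theorem \ref{main}(i) to conclude $\tilde u\in C^{2,\alpha}(\overline\Omega)$ for the convex-domain potential, and then Taylor-expands $\tilde u=\frac12|x|^2+O(|x|^{2+\alpha})$ at the boundary point (see \eqref{taylor1}). This expansion is what produces the initial closeness \eqref{cd0}, whose error $C\epsilon_0^{2+\alpha}$ becomes $C\epsilon_0^{\alpha}\to 0$ after the rescaling in Lemma \ref{rele}. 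For merely continuous densities $\tilde u$ is only $C^{1,\beta}\cap W^{2,p}$, there is no second-order Taylor expansion at a boundary point, and \eqref{cd0} is simply unavailable --- the paper states this obstruction explicitly. The fix in the paper is not to rerun \S2 verbatim but to replace the pointwise Taylor expansion by a normalisation of a sub-level set $S_h[\tilde u]$ (following the technique of \cite{Chen}): Theorem \ref{main}(ii) controls the eccentricity of sections of $\tilde u$ by a unimodular matrix $A$ with $\|A\|,\|A^{-1}\|\le Ch^{-\epsilon}$, one rescales by $\frac{1}{\sqrt h}A^{-1}$, and only then compares with the smooth constant-density solution $w$ of \eqref{mae02} and iterates. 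Your proposal is missing this idea, which is the actual content of the proof.

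Your Step two also diverges from the paper, and the route you sketch is shaky. Once $u\in C^{1,\beta}(\overline\Lambda)$ for all $\beta\in(0,1)$ is established, the paper concludes by Savin's covering argument \cite{Sa1} (as in the proof of Theorem \ref{main}(ii) in \cite{CLW}): the $C^{1,\beta}$ estimate gives sections of controlled eccentricity up to the boundary, and the global $W^{2,p}$ bound follows from interior $W^{2,p}$ theory on a covering by such sections. By contrast, you propose to compare $u$ with $\tilde u$ through a linearised Monge--Amp\`ere equation for the difference; but the coefficient matrix of that linearised operator is built from second derivatives of $u$ and $\tilde u$, which is precisely the quantity one is trying to estimate, so it is not ``controlled by Step one.'' I would drop that device and use the section-covering argument instead.
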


In the last section \S4, we give some interesting applications of Theorems \ref{ptheorem}, \ref{2ptheorem} in the free boundary problems, minimal Lagrangian diffeomorphism, and optimal transportation with general costs, and hope to motivate future study in these areas.

\vspace{10pt}

\section{$C^{1,\beta}$ regularity}\label{s2}

In this section, by using a perturbation and an iteration argument, we prove the $C^{1,\beta}$ estimate, for any given $\beta\in (0,1)$.
Some of our arguments are inspired by those in \cite{CF1,CGN,DF}. 
In \cite{DF}, De Philippis and Figalli \cite{DF} obtained a partial regularity result for optimal transport problem with general cost functions.
In \cite{CF1}, Figalli and the first author obtained a global regularity under a small perturbation of the quadratic cost. 
In \cite{CGN}, the regularity of optimal transport is obtained for the cost $|x-y|^p$ when $p$ is close to $2$.

\begin{lemma}\label{alpha1}
Under the hypotheses of Theorem \ref{ptheorem},
for any given  $\beta\in (0, 1)$,
there exits a small constant $\delta_0>0$ 
depending only on $\Omega, \Omega^*, \alpha, \beta, c_0, c_1, \|f\|_{C^{\alpha}(\Lambda)}$ and $\|g\|_{C^{\alpha}(\Lambda^*)}$,
such that the potential function $u\in C^{1,\beta}(\overline{\Lambda})$, provided $\delta<\delta_0.$
\end{lemma}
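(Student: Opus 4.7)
The plan is a localisation--iteration argument at each boundary point of $\Lambda$, using Theorem \ref{main} as the convex comparison at every scale. The target is to show that for every $x_0\in\p\Lambda$ there exists an affine function $\ell_{x_0}$ with
$$
|u(x)-\ell_{x_0}(x)|\le C|x-x_0|^{1+\beta}\quad\text{on }\Lambda\cap B_{r_0}(x_0),
$$
where $C$ and $r_0$ are independent of $x_0$; together with interior regularity this yields $u\in C^{1,\beta}(\ol\Lambda)$.

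\emph{Localisation and comparison problem.} Fix $x_0\in\p\Lambda$; after translation and rotation we may take $x_0=0$ and the inward normal of $\Om$ at $\Phi^{-1}(0)$ equal to $e_n$. At each small scale $r$, let $U_r:=\Lambda\cap B_r$ and construct a convex domain $\wtt\Om_r\supset U_r$ with $C^{1,1}$ boundary and Hausdorff distance $d_H(U_r,\wtt\Om_r)\le C\delta r^2$; this is possible because $\|\Phi-I\|_{C^{1,1}}\le\delta$ forces $\p\Lambda$ to be a $C^{1,1}$ graph whose normal deviation from its convexification is $O(\delta r^2)$ at scale $r$. A parallel construction on the target side produces a convex $\wtt\Om_r^*$ engulfing a neighbourhood of the image of $U_r$ under the transport map $T=Du$, together with renormalised densities preserving \eqref{BC}.

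\emph{Convex step and iteration.} Let $v$ be the Brenier potential on $(\wtt\Om_r,\wtt\Om_r^*)$ for those densities. After an affine normalisation to unit size, Theorem \ref{main} gives $v\in C^{2,\alpha}$ with scale-invariant bounds; in particular $v(x)=\ell_v(x)+O(\rho^2 r^2)$ on $B_{\rho r}(0)$ for the affine tangent $\ell_v$ of $v$ at $0$. The key analytic input is the quantitative $L^\infty$ stability
$$
\|u-v\|_{L^\infty(U_r)}\le \omega(\delta)\,r^2,\qquad \omega(\delta)\to 0\ \text{as}\ \delta\to 0.
$$
Combining these yields $|u-\ell_v|\le\bigl(C\rho^{1-\beta}+\omega(\delta)\rho^{-(1+\beta)}\bigr)\rho^{1+\beta}r^2$ on $U_{\rho r}$. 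Choosing first $\rho$ small depending on $\beta$ (exploiting $1-\beta>0$), and then $\delta$ small depending on $\rho$, this factor is made $\le 1/2$. Iterating at scales $r_k=\rho^k r_0$, with effective closeness parameter $O(\rho^k\delta)$ at scale $r_k$ so that the hypothesis propagates, one obtains a Cauchy sequence of affine approximations $\ell_{r_k}\to\ell_{x_0}$ with the exponent $1+\beta$.

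\emph{Main obstacle.} The heart of the proof is the quantitative stability estimate, uniformly in $x_0$ and across scales. A soft compactness argument via weak convergence of Brenier potentials yields $\omega(\delta)\to 0$ only at a single normalised scale, whereas the iteration needs a uniform modulus that can be rescaled. The strategy is to use the uniform $C^{2,\alpha}$ estimate on $v$ from Theorem \ref{main}, which gives strict convexity and boundary $C^1$ continuity of $v$, combined with the Hausdorff closeness of domains produced by $\Phi$, to compare supporting hyperplanes and transfer smoothness of $v$ back to $u$. This adapts the perturbation techniques of \cite[\S5]{CLW} and \cite{CF1, DF} cited in the paper; once the stability modulus is in hand, the remainder of the iteration is standard.
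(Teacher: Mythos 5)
Your overall scheme (localise at a boundary point, compare with a regular solution, iterate with a geometric gain, choose $\rho$ before $\delta$) has the right shape, but the proof rests entirely on the displayed stability estimate $\|u-v\|_{L^\infty(U_r)}\le\omega(\delta)\,r^2$ at \emph{every} scale $r$, and this is precisely the step you have not established. The gap is real and not merely technical: the optimal transport problem is nonlocal, so Hausdorff closeness of order $\delta r^2$ between $\Lambda\cap B_r$ and a convexification $\wtt\Om_r$ does \emph{not} imply that the two potentials differ by $o(1)\cdot r^2$ on $B_r$ --- a perturbation of the data far from $x_0$ perturbs $u$ near $x_0$ at order $\omega(\delta)$, with no gain in $r$. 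Compactness only ever yields a modulus at a single normalised scale, as you concede, and your proposed repair (``compare supporting hyperplanes and transfer smoothness of $v$ back to $u$'') is not an argument. A second, related problem is that your target-side convexification $\wtt\Om_r^*$ is required to engulf $Du(U_r)$, which presupposes control on $Du$ that is exactly what the lemma is meant to prove.

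The paper's proof is organised so that this difficulty never arises. The compactness comparison is used only twice, always at unit scale: once globally against the potential $\tilde u$ of the \emph{fixed} convex pair $(\Om,\Om^*)$ (Lemma \ref{vclose}), and once, after each renormalisation, against the solution $w$ of a \emph{constant-density} problem on a domain symmetrised by reflection across a hyperplane just below the boundary (so that the origin becomes an interior point and interior estimates give $\|w\|_{C^3(B_{1/5})}\le C$; no boundary $C^{2,\alpha}$ theorem is invoked at the small scales). The $r^2$ gain you are missing is produced instead by (i) the $C^{2,\alpha}$ Taylor expansion of $\tilde u$, which makes the error at scale $\epsilon_0$ equal to $\omega(\delta)+C\epsilon_0^{2+\alpha}$, absorbed by choosing $\delta$ \emph{after} $\epsilon_0$; and (ii) an iteration on sub-level sets $S_{h_0^k}[u]$ rather than balls, with affine normalising matrices $A_k$ of bounded norm and determinant one, so that each renormalised problem again satisfies the hypotheses of the approximation lemma with \emph{improved} flatness and density oscillation, and the unit-scale compactness estimate can be reapplied. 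If you want to salvage your ball-based scheme you would have to prove a genuinely local, scale-invariant stability theorem for Brenier potentials; the sub-level-set iteration avoids having to do so.
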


By the interior regularity of the Monge-Amp\`ere equation \cite{C1,JW},  
it suffices to prove Lemma \ref{alpha1} near the boundary. 
The proof is divided into four subsections following the strategy that:
First in \S\ref{ss21}, we show that since the domains are small perturbations of convex domains, the potential $u$ is also a small perturbation of a $C^{2,\alpha}$ potential function $\tilde u$ for convex domains. 
Then in \S\ref{ss22}, we localise the problem by rescaling it near a boundary point, and show that $u$ is close to the parabola $\frac{1}{2}|x|^2$ (given by the second order Taylor expansion of $\tilde u$ at the origin), the densities are close to constants, and the boundaries of domains are close to be flat.
Next in \S\ref{ss23}, we prove that $u$ is close to a convex function $w$ solving an optimal transport problem with constant densities. In addition, $w$ is smooth, and thus $u$ is even closer to a parabola (given by the second order Taylor expansion of $w$ at the origin) inside a small sub-level set $S_{h_0}[u]$. 
Last in \S\ref{ss24}, by rescaling $S_{h_0}[u]$ at scale $1$ and iterating the above steps, we obtain that $u$ is $C^{1,\beta}$ at the origin for any given $\beta\in (0,1)$, and thus prove Lemma \ref{alpha1}.

\subsection{Comparison with a solution over convex domains}\label{ss21}

Let $\Omega$ be a $C^{1,1}$ convex domains in $\R^n$. 
For any given point $x_0=0\in \partial\Omega $, 
there exists  a small ball $B_{r}=B_r(x_0)$, of which the radius $r$ is independent of $x_0$, 
such that after a rotation of the coordinates, locally the boundary can be expressed as
$$\partial\Omega\cap B_r=\{x=(x', x_n)\in\R^n\ : \ x_n=\eta(x')\} , $$ 
where $x'=(x_1, \cdots, x_{n-1})$, and $\eta$ is a $C^{1,1}$ convex function satisfying 
$$\eta\geq0,\ \ \ \eta(0)=0\ \ \ \ \text{and}\ \  D\eta(0)=0 . $$

Let $\Lambda\subset\R^n$ be a $C^{1,1}$ domain and is $\delta$-close to $\Omega$ in $C^{1,1}$ norm. 
From the definition \eqref{deldis}, 
locally $B_r\cap \partial \Lambda$ can be represented as the graph of a $C^{1,1}$ function $\rho$ such that 
\beq\label{dd2}
\|\rho-\eta\|_{C^{1,1}(B'_r)}\leq \delta,
\eeq
for all $x_0\in \partial\Omega$, where $B'_r$ is a ball in $\mathbb{R}^{n-1}$ with the radius $r>0$ independent of $x_0$. 
In fact, the bijection $\Phi$ in \eqref{deldis} can be defined such that for $x\in\Om$ close to $\pom$, $\Phi(x)=(x', x_n+(\rho-\eta)(x'))$.
Then \eqref{deldis} is equivalent to \eqref{dd2} due to a finite covering and the compactness of $\pom$. 

\vskip5pt

Recall that $f, g$ are the given densities supported on $\Lambda, \Lambda^*$, respectively, 
and $u$ is the potential function of the optimal transport from $\Lambda$ to $\Lambda^*.$
Under the hypotheses of Theorem \ref{ptheorem},
let $f_1, g_1\in C^{\alpha}(\mathbb{R}^n)$ be the extensions of $f, g$ with the same H\"older exponent 
and satisfy \eqref{fg} for some positive constants $c_0, c_1$ (that may be different to the constants in \eqref{fg}).
Let $\tilde{f}, \tilde{g}$ be the restriction of $f_1, g_1$ on $\Omega, \Omega^*$.
Let $\tilde{u}$ be the potential function of the optimal map from $(\Omega, \tilde{f})$ to $(\Omega^*, \lambda\tilde{g}),$
where the constant $\lambda$ is chosen such that $\int_\Omega \tilde{f}=\int_{\Omega^*}\lambda\tilde{g}.$
Apparently $\lambda\rightarrow 1$ as $\delta\rightarrow 0$.  
Without loss of generality we may assume directly that $\lambda=1$.
Replacing $\Om$ by $\hat x+(1+C\delta)(\Om-\hat x)$ for some interior point $\hat x\in\Om$, (similarly to $\Om^*$),
we may also assume that $\Lambda\subset \Omega$ and $\Lambda^*\subset \Omega^*$.   

\vskip5pt

Therefore, in the following we always have
$\Lambda\subset \Omega$, $\Lambda^*\subset \Omega^*$, and 
 $\Lambda, \Lambda^*$ are $\delta$-close to the convex domains $\Om, \Om^*$ respectively, and that 
\beq\label{fgt}
\int_\Omega \tilde{f}=\int_{\Omega^*} \tilde{g} . 
\eeq
Let $\tilde{u}$ be the potential function of the optimal transport from $(\Omega, \tilde{f})$ to $(\Omega^*, \tilde{g})$.  
For simplicity, we introduce the notation 
$$\|u-\tilde u\|_{\infty}=\sup_{x\in\Lambda} \big\{ [u(x)-u(x_0)] - [\tilde u(x)-\tilde u(x_0)] \big\} , $$
where $x_0$ is the mass centre of $\Lambda$.
{By adding a suitable constant to $u$ such that $u(x_0)=\tilde u(x_0)$, one can see that $\|u-\tilde u\|_{\infty}=\|u-\tilde{u}\|_{L^\infty(\Lambda)}$.}
  
\begin{lemma}\label{vclose}
There exists a positive function $\omega: \mathbb{R}_+\rightarrow\mathbb{R}_+$,
depending only on $c_0, c_1$, the inner and outer radii of $\Omega,   \Omega^* $, 
with the property $\omega(\delta)\rightarrow 0$ as $\delta\rightarrow 0$, 
such that 
\beq\label{Esti0}
\|u-\tilde{u}\|_{\infty} \leq \omega(\delta).
\eeq
\end{lemma}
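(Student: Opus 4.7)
The plan is to prove the lemma by a compactness/contradiction argument that exploits the stability of Brenier potentials under weak-$*$ convergence of the marginal measures. Both $u$ and $\tilde u$ are convex with gradients in $\overline{\Lambda^*}\subset\overline{\Om^*}$ and $\overline{\Om^*}$, respectively, so they are globally Lipschitz with a constant depending only on $\diam(\Om^*)$. Extending $u$ from $\overline\Lambda$ to $\overline\Om$ as a convex Lipschitz function (for instance via the supremum of its supporting hyperplanes) and using the normalisation $u(x_0)=\tilde u(x_0)$, we obtain uniform $L^\infty$ bounds on $u$ and $\tilde u$ over $\overline\Om$ depending only on $c_0,c_1$ and the inner and outer radii of $\Om,\Om^*$.

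Suppose for contradiction the conclusion fails. Then there exist $\eta_0>0$ and sequences $\delta_k\to 0$, domains $\Lambda_k,\Lambda_k^*$ that are $\delta_k$-close to $\Om,\Om^*$, and densities $f_k,g_k$ obeying \eqref{BC}--\eqref{fg}, whose corresponding potentials satisfy $\|u_k-\tilde u_k\|_{\infty}\geq \eta_0$. By the uniform Lipschitz and $L^\infty$ bounds, Arzel\`a--Ascoli yields subsequences with $u_k\to u_*$ and $\tilde u_k\to \tilde u_*$ uniformly on $\overline\Om$, both convex, with $u_*(x_0)=\tilde u_*(x_0)$. By weak-$*$ compactness we may further assume the source measures $f_k\mathbf 1_{\Lambda_k}$ and $\tilde f_k\mathbf 1_\Om$ converge to a common limit $\mu_*$: indeed $|\Om\setminus\Lambda_k|\to 0$ and $\tilde f_k\le c_1$ imply the two sequences have the same weak limit. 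Similarly, $g_k\mathbf 1_{\Lambda_k^*}$ and $\tilde g_k\mathbf 1_{\Om^*}$ share a common weak limit $\nu_*$ on $\overline{\Om^*}$, and $\mu_*(\Om)=\nu_*(\Om^*)$.

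By classical stability of the quadratic-cost optimal transport under weak convergence of source and target measures, both $u_*$ and $\tilde u_*$ are Brenier potentials of the optimal transport from $(\Om,\mu_*)$ to $(\Om^*,\nu_*)$. Since $\Om^*$ is convex and $\mu_*$ is absolutely continuous with density bounded between $c_0$ and $c_1$, Brenier's theorem gives uniqueness of the potential up to an additive constant; together with $u_*(x_0)=\tilde u_*(x_0)$ this forces $u_*\equiv\tilde u_*$ on $\overline\Om$, contradicting the passage to the limit in $\|u_k-\tilde u_k\|_\infty\geq\eta_0$.

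The main obstacle is the rigorous verification that the uniform limits $u_*$ and $\tilde u_*$ are indeed Brenier potentials of the limit problem, i.e.\ that the pushforward relations $(Du_k)_\sharp(f_k\mathbf 1_{\Lambda_k})=g_k\mathbf 1_{\Lambda_k^*}$ pass to the limit. This is standard but requires the a.e.\ convergence of gradients of convex functions from uniform convergence of the functions themselves, combined with the uniform bounds $c_0\le f_k,g_k\le c_1$ to upgrade weak convergence of measures to the convergence of pushforwards; the difference between $\Lambda_k$ and $\Om$ (resp.\ $\Lambda_k^*$ and $\Om^*$) is absorbed since those sets have vanishing measure as $\delta_k\to 0$. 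All constants track only $c_0,c_1$ and the inner/outer radii of $\Om,\Om^*$, which yields the modulus $\omega(\delta)$ claimed.
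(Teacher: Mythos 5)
Your proposal is correct and follows essentially the same route as the paper: a compactness/contradiction argument using uniform Lipschitz bounds, weak convergence of the marginals, stability of Brenier potentials, and uniqueness of the optimal map up to an additive constant. The only (minor) difference is that to obtain a modulus $\omega$ depending only on $c_0,c_1$ and the inner and outer radii of $\Om,\Om^*$ (as the lemma asserts, and as is needed later when the argument is applied to rescaled domains), the paper also lets the convex domains $\Omega_k,\Omega_k^*$ vary along the contradicting sequence and extracts Hausdorff limits, whereas you keep $\Om,\Om^*$ fixed; your argument extends verbatim once Blaschke selection is added.
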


\begin{proof}
Suppose to the contrary that there exist
$\{\Lambda_k, \Lambda_k^*, f_k, g_k\}$, 
and the convex approximations $\{\Omega_k, \Omega_k^*, \tilde{f}_k, \tilde{g}_k\}$ satisfying the above conditions
such that the associated potential functions satisfy 
\beq\label{appk}
\|u_k-\tilde{u}_k\|_\infty >\delta_0
\eeq
for a fixed small constant $\delta_0$ independent of $k$.
Passing to a subsequence and taking the limit we have
\begin{itemize}
\item $\Lambda_k, \Lambda_k^*$ (and also $\Omega_k, \Omega^*_k$) converge to convex domains
$\Omega_\infty$, $\Omega^*_\infty$ in Hausdorff distance respectively, as $k\rightarrow \infty $.  

\item  $f_k\chi_{\Lambda_k}, g_k\chi_{\Lambda^*_k}$ 
(and also $\tilde{f}_k\chi_{\Omega_k}, \tilde{g}_k\chi_{\Omega^*_k}$) 
converge weakly  to $f_\infty\chi_{\Omega_\infty}$, $g_\infty\chi_{\Omega^*_\infty}$,
where $f_\infty, g_\infty$ satisfy \eqref{fg} in $\Omega_\infty$, $\Omega^*_\infty$.

\item  $u_k\rightarrow u_\infty, \tilde{u}_k\rightarrow \tilde{u}_\infty$ for some convex functions $u_\infty, \tilde{u}_\infty$.  
\end{itemize}
Since \eqref{appk} is independent of $k$, we obtain 
\beq\label{appk2}
\|u_\infty-\tilde{u}_\infty\|_\infty \geq\delta_0.
\eeq

On the other hand, $u_k$ are potential functions of the optimal transport from $(\Lambda_k, f_k)$ to $(\Lambda_k^*, g_k)$,
$u_\infty$ is the potential function of the optimal transport from $(\Om_\infty, f_\infty)$ to $(\Om^*_\infty, g_\infty)$.
Hence  $Du_\infty$ and $D\tilde{u}_\infty$ are both optimal maps from $(\Omega_\infty, f_\infty)$ to $(\Omega^*_\infty, g_\infty)$.
It follows that $Du_\infty =D\tilde{u}_\infty$ a.e. and thus 
$u_\infty =\tilde{u}_\infty+c$ for some constant $c$, which implies that $\|u_\infty-\tilde{u}_\infty\|_\infty = 0$ contradicting with \eqref{appk2}.
The lemma is proved.
\end{proof}

\vskip5pt

\subsection{Localisation near a boundary point}\label{ss22}

Let $0\in \partial\Lambda$ be a boundary point, and locally the boundary is given by a $C^{1,1}$ function $\rho$ such that
$$\partial\Lambda=\{x : x_n=\rhoprime (x')\}\ \ \ \text{with}\ \  \rhoprime (0)=0, \ D\rhoprime (0)=0.$$
Recall that $\tilde{u}$ is the potential function of the optimal transport from $C^{1,1}$ convex domains $\Omega$ to $\Omega^*$. 
From Theorem \ref{main} (i), $\tilde{u}\in C^{2,\alpha}(\overline{\Omega}) $.
Hence by subtracting a linear function and performing
an affine transformation, we may assume that
\begin{equation}\label{taylor1}
\tilde{u}=\frac{1}{2}|x|^2+O(|x|^{2+\alpha}),
\end{equation}
near the origin. 
By adding a constant, we also assume that $u(0)=\tilde u(0)=0$. 
Then from Lemma \ref{vclose}, near the origin, one has
\beqs
\left|u-\frac12|x|^2\right| \leq \omega(\delta)+O(|x|^{2+\alpha}),
\eeqs
which implies that $\p u(0)$ converges to $0$ as $\delta\rightarrow 0$, where $\p u$ is the sub-differential of $u$.   
Hence, up to an affine transformation (converging to identity as $\delta\rightarrow 0$) there is a $C^{1,1}$ function $\rho^*$ such that locally
$$  \partial\Lambda^*= \{x: x_n=\rhoprime ^*(x')\}\ \ \ \text{with}\ \  
                 \rhoprime ^*(0)\to 0\ \text{and} \ D\rhoprime ^*(0)\to 0,\ \ \text{ as}\ \delta\to 0. $$

\vskip5pt

Denote $U=B_{\epsilon_0}\cap \Lambda$ and $U^*=\partial u(U)\cap \Lambda^* $,  
where $\epsilon_0>0$ is a small constant to be determined later.
By subtracting a linear function $\ell$ from $u$ with $|D\ell|\rightarrow 0$ as $\delta\rightarrow 0$, 
we may assume that $u(0)=0$ and $0\in \partial u(0)$.
By Lemma \ref{vclose} and \eqref{taylor1}, we have
\beq\label{cd0}
\|u-\frac12|x|^2\|_{L^\infty(U)} \leq \omega(\delta) + C\epsilon_0^{2+\alpha},
\eeq
and by the convexity of $u$,
\beq\label{imap}
\{x: x_n>\rhoprime ^*(x')\}\cap B_{\epsilon_0/2}\subset U^*\subset \{x: x_n>\rhoprime ^*(x')\}\cap B_{2\epsilon_0}.
\eeq

Make a rescaling 
\beq\label{resc}
x\mapsto x/\epsilon_0
\eeq
such that $B_{\epsilon_0}$ becomes $B_1$. Denote
\beqs
u'(x):=\frac{1}{\epsilon_0^2}u(\epsilon_0 x),\quad \mbox{ in the domain }\ U':=\frac{1}{\epsilon_0}U.
\eeqs

\begin{lemma}\label{rele}
For any given $\eta_0>0$ small, by choosing $\delta, \epsilon_0$ sufficiently small, one has
\begin{equation} \label{cd1}
\|u'-\frac{1}{2}|x|^2\|_{L^\infty(U')}<\eta_0.
\end{equation}
Moreover, under the rescaling \eqref{resc}, the densities $f, g$ tend to be constant and boundaries $\p\Lambda, \p\Lambda^*$ tend to be flat, as $\epsilon_0\to0$. 
\end{lemma}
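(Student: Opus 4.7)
The plan is to reduce the lemma to a direct computation: since $u'$ is defined by $u'(y) = \epsilon_0^{-2} u(\epsilon_0 y)$, we get
\beq
u'(y) - \tfrac{1}{2}|y|^2 = \epsilon_0^{-2}\bigl[u(\epsilon_0 y) - \tfrac{1}{2}|\epsilon_0 y|^2\bigr],\notag
\eeq
so taking the supremum over $y\in U'=\epsilon_0^{-1}U$ and invoking \eqref{cd0} gives
\beq
\|u'-\tfrac{1}{2}|y|^2\|_{L^\infty(U')} \leq \epsilon_0^{-2}\omega(\delta) + C\epsilon_0^{\alpha}.\notag
\eeq
To make the right-hand side smaller than $\eta_0$, I would first fix $\epsilon_0$ small enough that $C\epsilon_0^\alpha < \eta_0/2$, and then (with this $\epsilon_0$ now fixed) pick $\delta$ so small that $\epsilon_0^{-2}\omega(\delta) < \eta_0/2$, which is possible since $\omega(\delta)\to 0$ as $\delta\to 0$ by Lemma \ref{vclose}. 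This gives \eqref{cd1}.

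For the densities, I would observe that the push-forward condition under the scaling $x\mapsto x/\epsilon_0$ turns the optimal transport problem for $u$ on $(\Lambda,f)\to(\Lambda^*,g)$ into one for $u'$ on $(\Lambda/\epsilon_0, f')\to(\Lambda^*/\epsilon_0, g')$ with $f'(y)=f(\epsilon_0 y)$ and $g'(y^*)=g(\epsilon_0 y^*)$. Since $f\in C^\alpha(\overline\Lambda)$ and $g\in C^\alpha(\overline{\Lambda^*})$, for $|y|\le 1$ we have the trivial bounds
\beq
|f'(y)-f(0)| \le [f]_{C^\alpha}\epsilon_0^{\alpha},\qquad |g'(y^*)-g(0)| \le [g]_{C^\alpha}\epsilon_0^{\alpha},\notag
\eeq
so $f',g'$ converge uniformly to the constants $f(0)$, $g(0)$ as $\epsilon_0\to 0$.

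For the boundaries, near $0\in\partial\Lambda$ we have $\partial\Lambda=\{x_n=\rho(x')\}$ with $\rho(0)=0$, $D\rho(0)=0$ and $\rho\in C^{1,1}$; hence $|\rho(x')|\le C|x'|^2$ and $|D\rho(x')|\le C|x'|$. The rescaled boundary is $\{y_n=\tilde\rho(y')\}$ with $\tilde\rho(y'):=\epsilon_0^{-1}\rho(\epsilon_0 y')$, which satisfies
\beq
|\tilde\rho(y')|\le C\epsilon_0 |y'|^2,\qquad |D\tilde\rho(y')|=|D\rho(\epsilon_0 y')|\le C\epsilon_0|y'|,\notag
\eeq
so $\tilde\rho\to 0$ in $C^1(B_1')$ as $\epsilon_0\to 0$. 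For $\partial\Lambda^*$ the same estimate applies once one notes that $\rho^*(0)\to 0$ and $D\rho^*(0)\to 0$ as $\delta\to 0$: writing $\rho^*(x')=\rho^*(0)+D\rho^*(0)\cdot x'+O(|x'|^2)$ and rescaling yields $\epsilon_0^{-1}\rho^*(\epsilon_0 y') = \epsilon_0^{-1}\rho^*(0)+D\rho^*(0)\cdot y' + O(\epsilon_0)$, which tends to zero provided $\delta$ has been chosen (as above) sufficiently small relative to $\epsilon_0$.

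The main obstacle is the interplay of the two small parameters: the rescaling amplifies the $L^\infty$ error between $u$ and $\tilde u$ by a factor of $\epsilon_0^{-2}$, so $\epsilon_0$ must be fixed first (to tame the intrinsic second-order Taylor error $C\epsilon_0^\alpha$), and only afterwards can $\delta$ be chosen — with $\delta=\delta(\epsilon_0)$ — small enough that $\omega(\delta)\ll\epsilon_0^{2}\eta_0$ and that $\rho^*(0),D\rho^*(0)$ are much smaller than $\epsilon_0$ and $1$ respectively. Once this order of quantification is respected, every estimate above is routine.
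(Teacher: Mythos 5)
Your proof is correct and follows essentially the same route as the paper: the estimate $\|u'-\frac12|x|^2\|_{L^\infty(U')}\le \epsilon_0^{-2}\omega(\delta)+C\epsilon_0^\alpha$ from \eqref{cd0}, plus the direct scaling computations for $f,g$ and $\rho,\rho^*$. Your explicit handling of the quantifier order (fix $\epsilon_0$ first, then $\delta=\delta(\epsilon_0)$) and of the $\epsilon_0^{-1}\rho^*(0)$ term is in fact slightly more careful than the paper's write-up, which leaves these points implicit.
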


\begin{proof}
From \eqref{cd0}, \eqref{resc} and the fact that $U\subset B_{\epsilon_0}$, it is straightforward to see that for any given $\eta_0>0$ small, 
\begin{equation*} 
\|u'-\frac{1}{2}|x|^2\|_{L^\infty(U')}\leq \frac{\omega(\delta)}{\epsilon_0^2}+C\epsilon_0^\alpha<\eta_0
\end{equation*}
provided $\delta, \epsilon_0$ are sufficiently small.

By the rescaling \eqref{resc}, the domains become 
$U'=\frac{1}{\epsilon_0}U$, ${U^*}'=\frac{1}{\epsilon_0}U^*$,
which are locally given respectively by 
$${\begin{aligned}
     x_n> &\ {\tilde\rho}(x'):=\frac{1}{\epsilon_0}\rhoprime (\epsilon_0x'),\\ 
   x_n> &\ {\tilde\rho}^*(x'):=\frac{1}{\epsilon_0}\rhoprime ^*(\epsilon_0x').
\end{aligned}} $$
From \eqref{imap} it is easy to see that    
\begin{equation}\label{incl11}
B_{1/2}\cap  \{x_n> {\tilde\rho}(x')\}\subset  U'\subset B_{2}\cap  \{x_n> {\tilde\rho}(x')\},
\end{equation}
\begin{equation}\label{incl12}
B_{1/2}\cap  \{x_n> {\tilde\rho}^*(x')\}\subset  {U^*}'\subset B_{2}\cap  \{x_n> {\tilde\rho}^*(x')\}.
\end{equation}
For any given small $\eta_0>0$, by direct computation one can check that 
\begin{equation}\label{cd2}
{\begin{aligned}
 & \|{\tilde\rho}\|_{C^{1,1}(B'_{1/2})}\leq \epsilon_0\|{\rho}\|_{C^{1,1}(B'_{\epsilon_0})}<\eta_0, \ \\
 & \|{\tilde\rho^*}\|_{C^{1,1}(B'_{1/2})}\leq \epsilon_0\|{\rho^*}\|_{C^{1,1}(B'_{\epsilon_0})}<\eta_0, 
 \end{aligned}}
\end{equation}
provided $\epsilon_0$ are sufficiently small, where $B'_r$ indicates a ball in $\mathbb{R}^{n-1}$ with radius $r$. 
Therefore, under the rescaling \eqref{resc} the boundaries $\p\Lambda$ and $\p\Lambda^*$ tend to be flat as $\epsilon_0\to0$. 

Correspondingly, the density functions become $f_1(x)=f(\epsilon_0 x)$ in $U'$, and $g_1(x)=g(\epsilon_0 x)$ in ${U^*}'$. 
Similarly as \eqref{cd2} one has for any small $\eta_0>0$,
\begin{equation}\label{cd3}
{\begin{aligned}
 & \|f_1-f(0)\|_{C^\alpha(U')}<\eta_0,\ \\  
 & \|g_1-g(0)\|_{C^\alpha({U^*}')}<\eta_0,
 \end{aligned}}
 \end{equation}
provided $\epsilon_0$ are sufficiently small.      
Note that without loss of generality, we can always assume that $f(0)=g(0)=1.$ 
\end{proof}

\vskip 5pt

\subsection{Approximation by a smooth solution}\label{ss23}

By the rescaling in Lemma \ref{rele}, having  \eqref{incl11}--\eqref{cd3} we show that $u'$ can be approximated by a smooth convex function $w$ solving an optimal transport problem with constant densities. 
And then we deduce that $u$ is even closer to another parabola (comparing with \eqref{cd1}) in a small sub-level set 
\beq\label{sections}
S_{h_0}[u']:=\{x\in U' : u'(x)<h_0\},\quad\mbox{ for $h_0>0$ small.}
\eeq

Let
$$\delta_1:=\sup_{x'\in B'_2} |{\tilde\rho}(x')|+\sup_{x'\in B'_2} |{\tilde\rho}^*(x')|  $$ 
and denote $U'_-$ (resp. ${U^*}'_-$)  the reflection of $U'$ (resp. ${U^*}'$) with respect to the hyperplane $\{x_n=-\delta_1\} $.  
Let $z=(0,\cdots, 0, -\delta_1)$,
$${\begin{aligned}
  & \mathcal{D}_1:=U'\cup U'_-\cup B_{\frac{1}{2}}(z),\\ 
 & \mathcal{D}_2:=\lambda\left({U^*}'\cup {U^*}'_-\cup B_{\frac{1}{2}}(z)\right), 
 \end{aligned}} $$  
where the constant $\lambda$ is chosen such that
$|\mathcal{D}_1|=|\mathcal{D}_2| $.  
Note that $\delta_1\rightarrow 0$ as $\delta\rightarrow 0$,   
both $\mathcal{D}_1$ and $\mathcal{D}_2$ are symmetric with respect  to $\{x_n=-\delta_1\}$, 
and 
\begin{equation}\label{conta1}
B_{1/3}\subset \mathcal{D}_1, \mathcal{D}_2 \subset B_3.
\end{equation}
 Note also that $\lambda\rightarrow 1$ as $\delta\rightarrow 0$. 

Let $w$ be the convex function solving 
$(\partial w)_\sharp \chi_{\mathcal{D}_1}=\chi_{\mathcal{D}_2}$ with $w(0)=u'(0) $. 
Namely, $w$ is a solution to
\beq\label{mae01}
\bigg\{{\begin{aligned}
	 &\det\, D^2w(x) = 1 \ \ \text{in}\ \mathcal D_1,\\
	 &Dw(\mathcal D_1) =\mathcal D_2.
	 \end{aligned}} \eeq
By the symmetry of the data and the uniqueness of optimal transport maps, 
we see that $w' $,  the restriction of $w$ on to $\mathcal{D}_1\cap \{x_n>-\delta_1\} $,  solves 
$(\partial w')_\sharp \chi_{\mathcal{D}_1\cap \{x_n>-\delta_1\}}=\chi_{\mathcal{D}_2\cap \{x_n>-\delta_1\}} $.  
Namely,  $\p w' (\mathcal{D}_1\cap \{x_n>-\delta_1\})=\mathcal{D}_2\cap \{x_n>-\delta_1\} $.  
By a compactness argument similar to that of Lemma \ref{vclose}, we have  
\begin{equation}\label{cpt11}
\|w'-u'\|_{L^\infty(B_{{1}/{3}}\cap \{x_n>{\tilde\rho}(x')\})} 
    \leq \omega(\delta)\rightarrow 0,\quad \ \text{as}\ \  \delta\rightarrow 0.
\end{equation}

By \eqref{cd1} we then have 
 \begin{equation}\label{ap1}
 \|w'-\frac{1}{2}|x|^2\|_{L^\infty(B_{{1}/{3}}\cap \{x_n>{\tilde\rho}(x')\})} 
       \leq \omega(\delta)+\eta_0\rightarrow 0,\quad \ \text{as}\ \  \delta, \eta_0\rightarrow 0.
 \end{equation}
 By the symmetry of $w $,  we also have that  
  \begin{equation}\label{ap2}
 \|w-\frac{1}{2}|x|^2\|_{L^\infty(B_{{1}/{4}}\cap \{x_n>{\tilde\rho}(x')\}  )}
      \leq \omega(\delta)+\eta_0\rightarrow 0,\quad \ \text{as}\ \  \delta, \eta_0\rightarrow 0.
 \end{equation}
Noting that by \eqref{ap2}  and symmetry of $w$, we have $\partial w(B_{1/5})\subset B_{1/4}$, provided $\delta, \eta_0$ are sufficiently small.
Since  $w$ is a solution to \eqref{mae01}, by the interior estimates \cite{GT} we obtain 
\beq\label{intC3}
\|w\|_{C^3(B_{1/5})}\leq C
\eeq
for a universal constant $C>0$.

\vskip5pt

Since $\|w\|_{C^3(B_{1/5})}\leq C$ and $w(0)=0$,  we have the Taylor expansion
\beq\label{Taylorw}
w(x)=Dw(0)\cdot x+\frac{1}{2}D^2w(0)x\cdot x+O(|x|^3).
\eeq
From \eqref{cpt11} and by a compactness argument, we claim that 
$|Dw(0)|\rightarrow 0$ as $\delta \rightarrow 0$.
Indeed, in the limit profile as $\delta \rightarrow 0$ we have $w=u'$ on
$B_{\frac{1}{3}}\cap \{x_n>{\tilde\rho}(x')\}$ which implies that $Dw(0)=Du'(0)=0$. 
Therefore, for a given small $h_0>0$, up to an affine transformation, 
\[ \{w<h_0\}\approx B_{\sqrt{h_0}}\quad\mbox{and}\quad \partial w(\{w<h_0\})\approx B_{\sqrt{h_0}}, \quad\mbox{ as }\delta \rightarrow 0.\]

\vskip 5pt

In the following lemma, we show that the sub-level sets of $u'$ and their images are close to ellipsoids with controlled eccentricity, 
and $u'$ is close to a parabola given by the second order Taylor expansion of $w$ in a small sub-level set $S_{h_0}[u']$ defined by \eqref{sections}. 

\begin{lemma}\label{l2}
For any given constant $\tilde{\eta}_0>0$ small, 
there exist small positive constants $h_0=h_0(\tilde{\eta}_0)$, $\delta_0=\delta_0(h_0,\tilde{\eta}_0)$,
and a symmetric matrix $A$ with $\|A\|$, $\|A^{-1}\|\leq K$ (a universal constant)
and $\det\ A=1$ such that
\begin{equation}\label{incl1}
A\left(B_{\sqrt{h_0/3}} \right)\cap \{x_n> {\tilde\rho}(x')\} \subset S_{h_0}[u']\subset
A\left(B_{\sqrt{3h_0}}\right)\cap \{x_n> {\tilde\rho}(x')\},
\end{equation}
\begin{equation}\label{incl2}
A^{-1}\left(B_{\sqrt{{h_0}/{3}}} \right)\cap \{x_n> {\tilde\rho}^*(x')\} \subset  \partial u'(S_{h_0}[u'])\subset
A^{-1}\left(B_{\sqrt{3h_0}} \right)\cap \{x_n> {\tilde\rho}^*(x')\},
\end{equation}
provided $\delta<\delta_0$.
Moreover $$\|u'-\frac{1}{2}|A^{-1}x|^2\|_{L^\infty\left(A\left(B_{\sqrt{h_0/3}}\right)\cap\{x_n> {\tilde\rho}(x') \}\right)}\leq \tilde{\eta}_0 h_0, $$
and $A^{-1}(\textbf{e}_n)$ is parallel to $A(\textbf{e}_n)$.
\end{lemma}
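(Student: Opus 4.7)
The plan is to extract the matrix $A$ from the Hessian of the smooth approximation $w$ at the symmetry centre $z := -\delta_1 e_n$, and then transfer the quadratic behaviour of $w$ onto $u'$ using the $L^\infty$ closeness \eqref{cpt11}. Set $M := D^2 w(z)$; by \eqref{intC3} together with $\det D^2 w = 1$, both $\|M\|$ and $\|M^{-1}\|$ are bounded by a universal constant $K$. The exact reflection symmetry of $\mathcal D_1, \mathcal D_2$ about $\{x_n = -\delta_1\}$, combined with uniqueness of the optimal transport map $\nabla w$, forces $\nabla w$ to commute with this reflection; hence $\partial_i \partial_n w(z) = 0$ for $i < n$, so $e_n$ is an exact eigenvector of $M$. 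Taking $A := M^{-1/2}$, the positive symmetric square root, yields a symmetric matrix with $\det A = 1$, $\|A\|, \|A^{-1}\| \le K$, and $A e_n, A^{-1} e_n$ both parallel to $e_n$, as required.

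Next, Taylor expand $w$ at $z$: symmetry forces $\nabla w(z)$ to be parallel to $e_n$, and $|\nabla w(z)| \le C\delta_1 + \omega(\delta)$ using $|Dw(0)| \to 0$ (noted just after \eqref{Taylorw}). Absorbing the linear part as an additive $O(\delta_1 + \omega(\delta))$ error and invoking the $C^3$ bound \eqref{intC3}, one obtains on $B_{1/5}$
\[ \bigl| w(x) - \tfrac{1}{2}|A^{-1}x|^2 \bigr| \le C|x|^3 + C\delta_1 + \omega(\delta), \]
and combining this with \eqref{cpt11} gives
\[ \bigl| u'(x) - \tfrac{1}{2}|A^{-1}x|^2 \bigr| \le C|x|^3 + C\delta_1 + 2\omega(\delta) \quad \text{on } B_{1/5} \cap \{x_n > \tilde\rho\}. \]
One first chooses $h_0$ with $h_0 \lesssim \tilde\eta_0^2$ so that $C(K\sqrt{3h_0})^3 \le \tfrac{1}{2}\tilde\eta_0 h_0$; then, for this $h_0$ fixed, one shrinks $\delta_0$ so that $C\delta_1 + 2\omega(\delta) \le \tfrac{1}{2}\tilde\eta_0 h_0$ whenever $\delta < \delta_0$. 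This yields the $L^\infty$ bound by $\tilde\eta_0 h_0$ on all of $A(B_{\sqrt{3h_0}})$, and a fortiori on the smaller set stated in the lemma.

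The inclusions \eqref{incl1} then follow by a sandwich argument: $\{u' < h_0\}$ lies between $\{\tfrac12|A^{-1}x|^2 < h_0(1 \pm \tilde\eta_0)\}$, and for $\tilde\eta_0$ small these are respectively squeezed inside $A(B_{\sqrt{3h_0}})$ and contain $A(B_{\sqrt{h_0/3}})$, each intersected with $\{x_n > \tilde\rho\}$. The image inclusions \eqref{incl2} come from the fact that the gradient of the approximating quadratic, $\nabla(\tfrac12|A^{-1}x|^2) = A^{-2}x$, maps $A(B_r)$ bijectively onto $A^{-1}(B_r)$: the convexity of $u'$ and its $L^\infty$ closeness to the quadratic propagate to the subdifferentials through a standard Aleksandrov/Monge--Amp\`ere volume-matching argument (the total measure of the image is preserved up to $\omega(\delta)$ since $\det D^2 u' \approx 1$), while the cutoff against $\{x_n > \tilde\rho^*\}$ is inherited from \eqref{incl12}.

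The main obstacle is orchestrating the three small parameters: $h_0$ must be chosen after $\tilde\eta_0$ so that the cubic remainder $C|x|^3$ from the Taylor expansion of $w$ is dominated by $\tilde\eta_0 h_0$ on the relevant ball (this forces $h_0 = O(\tilde\eta_0^2)$), and only then can $\delta_0$ be selected small enough that both the transport perturbation error $\omega(\delta)$ of \eqref{cpt11} and the centre-shift error $O(\delta_1)$ become negligible relative to $\tilde\eta_0 h_0$. A secondary technicality, avoided by expanding at $z$ rather than at $0$, is making $e_n$ an \emph{exact} eigenvector of $M$; expanding at the origin would produce only an $O(\delta_1)$ approximate eigenvector, necessitating an auxiliary perturbation of $A$ to recover the strict parallelism of $A e_n$ and $A^{-1} e_n$ demanded by the lemma.
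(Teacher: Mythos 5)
Your construction of $A$ and the derivation of the $L^\infty$ estimate follow essentially the paper's route. The paper expands $w$ at the origin, obtains $w_{ni}(0)=O(\delta)$ from the reflection symmetry, and then perturbs $[D^2w(0)]^{-1/2}$ by $O(\delta)$ to enforce $A^{-1}(e_n)\parallel A(e_n)$; you expand at the symmetry centre $z=-\delta_1 e_n$ and get $e_n$ as an exact eigenvector for free, which is a harmless (arguably cleaner) variant, and your ordering of the parameters ($h_0=O(\tilde\eta_0^2)$ first, then $\delta_0$) matches the paper's. One small slip: reflection symmetry about $\{x_n=-\delta_1\}$ forces $w_n(z)=0$, so $\nabla w(z)$ is \emph{perpendicular} to $e_n$, not parallel to it; this does not hurt you, since all you actually use is the smallness $|\nabla w(z)|\le |Dw(0)|+C\delta_1\to 0$.

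The genuine gap is the first inclusion of \eqref{incl2}. Volume matching does not prove a set inclusion: knowing $|\partial u'(S_{h_0}[u'])|=\int_{S_{h_0}[u']} f_1/g_1(\partial u')\approx |S_{h_0}[u']|$ together with the upper inclusion only says that the image is a set of roughly the right measure inside $A^{-1}(B_{\sqrt{3h_0}})\cap\{x_n>\tilde\rho^*\}$; since that ambient set has measure larger by a fixed factor, the image could in principle still omit part of the half-ellipsoid $A^{-1}(B_{\sqrt{h_0/3}})\cap\{x_n>\tilde\rho^*\}$. The paper closes this by passing to the Legendre transform $u^*$ of $u'$, verifying $\|u^*-\frac12|Ay|^2\|_{L^\infty}\le 2\tilde\eta_0 h_0$ on the dual half-ball, and invoking $B\subset \partial u'(\partial u^* (B))$ for Borel $B$; equivalently, for each $y$ in the small half-ellipsoid one minimises $x\mapsto u'(x)-y\cdot x$ and uses the $L^\infty$ closeness to the parabola to check that the minimiser lies in $S_{h_0}[u']$, whence $y\in\partial u'(S_{h_0}[u'])$. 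Some argument of this pointwise type (or a topological degree argument) is needed in place of the measure comparison.
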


\begin{proof}
The proof is similar to that in \cite{CF1}, and the main steps are outlined as follows. 
Denote by $\omega_0$ the term $\omega(\delta_0)$ in \eqref{cpt11}.
Then, from \eqref{cpt11} and \eqref{Taylorw} we have 
\begin{equation}
\|u'-\frac{1}{2}D^2w(0)x\cdot x\|_{L^\infty(E_{4h_0}\cap\{x_n> {\tilde\rho}(x')\})} \leq \omega_0+|Dw(0)\cdot x|+O(h_0^{\frac{3}{2}})
\end{equation}
where $E_{h_0}:=\{x:\frac{1}{2}D^2w(0)x\cdot x\leq h_0\}.$
Therefore, by taking $h_0, \delta_0$ sufficiently small we can obtain 
\begin{equation}\label{d2wapp}
\|u'-\frac{1}{2}D^2w(0)x\cdot x\|_{L^\infty(E_{4h_0}\cap\{x_n> {\tilde\rho}(x')\})} \leq \frac{1}{2}\tilde{\eta}_0 h_0.
\end{equation}

Note that $\frac{1}{C}I\leq D^2w(0)\leq CI$ for some universal constant $C$.
By symmetry, $w_n=0$ on $\mathcal{P}:=\{x\in B_{1/5} : x_n=-\delta_1\}$, and thus $w_{ni}=0$ on $\mathcal{P}$ for all $1\leq i\leq n-1$. 
Since $w\in C^3(B_{1/5})$ and $\delta_1\to0$ as $\delta\to0$, we obtain at the origin $D^2w(0)=D^2w(z)+O(\delta)$, where $z=(0,\cdots,0,-\delta_1)$, and 
\[ w_{ni}(0) = O(\delta),\quad\mbox{ for all $i=1,\cdots,n-1$.} \] 
Hence, one can find a symmetric matrix $A$ satisfying $\|A-[D^2w(0)]^{-1/2}\| = O(\delta)$ such that $A^{-1}(e_n)$ is parallel to $A(e_n)$. 
This implies that $\|A^{-1}\|, \|A\|\leq K$, for a universal constant $K$.
And from \eqref{d2wapp} we obtain
\begin{equation}
\|u'-\frac{1}{2}|A^{-1}x|^2\|_{L^\infty(B_{2\sqrt{h_0}}\cap\{x_n> {\tilde\rho}(x')\})} \leq \tilde{\eta}_0 h_0
\end{equation}
which gives \eqref{incl1} and the second inclusion of \eqref{incl2}.
We refer the reader to \cite[Lemma 4.3]{CF1} for more detailed computation on the matrix $A$. 

To prove the first inclusion of \eqref{incl2}, we need to use the Legendre transform of $u'$,  namely let 
$u^*:B_{2\sqrt{h_0}}\cap\{x_n> {\tilde\rho^*}(x')\} \rightarrow \mathbb{R}$ be the convex function defined by
$$u^*(y):=\sup_{B_{2\sqrt{h_0}}\cap\{x_n> {\tilde\rho}(x')\}}\left\{ x\cdot y-u'(x) \right\}.$$ 
Then one can verify that 
$$\|u^*-\frac{1}{2}|Ay|^2\|_{L^\infty(B_{2\sqrt{h_0}}\cap\{x_n> {\tilde\rho}^*(x')\})}\leq 2\tilde{\eta}_0h_0.$$ 
By the standard property of Legendre transform 
$B\subset \partial u'(\partial u^*(B))$ for any Borel set $B $,  we can easily get the desired inclusion from the previous estimate.  
\end{proof}

\vskip5pt

\subsection{Iteration argument} \label{ss24}
Let $u_1=u'$, $U_1=U'$ and $U^*_1={U^*}'$. Then we have the initial setting in Lemma \ref{l2} for $u_1$, which is the potential function of the optimal transport from $(U_1, f_1)$ to $(U^*_1, g_1)$, where $f_1, g_1$ are the rescaled densities in Lemma \ref{rele}. 
Let $A_1=A$ be the symmetric matrix in Lemma \ref{rele}.
Now make the rescaling 
\beq\label{res2} 
x\mapsto \frac{1}{\sqrt{h_0}}A_1^{-1}x,
\eeq
and define
\begin{align*}
  u_2(x) &= \frac{1}{h_0}u_1(\sqrt{h_0}A_1x), \\
 f_2(x) &=f_1(\sqrt{h_0}A_1x),\\
  g_2(x) &=g_1(\sqrt{h_0}A_1^{-1}x).
\end{align*}
Moreover, let $U_2=S_{1}[u_2]$ and $U^*_2=\p u_2(S_{1}[u_2])$.
Thanks to Lemma \ref{l2}, we have $u_2$ is the potential function of the optimal transport from $(U_2, f_2)$ to $(U^*_2, g_2)$ satisfying all conditions of Lemma \ref{rele}.
Therefore, we can apply the argument in \S\ref{ss23} to $u_2$.  

Similarly, for $k=1,2,\cdots$, by the rescaling $x\mapsto \frac{1}{\sqrt{h_0}}A_k^{-1}x$, letting
\begin{align*}
 u_{k+1}(x) &=\frac{1}{h_0}u_k(\sqrt{h_0}A_kx), \\
 f_{k+1}(x) &=f_k(\sqrt{h_0}A_kx),\\ 
 g_{k+1}(x) &=g_k(\sqrt{h_0}A_k^{-1}x),
\end{align*}
and from Lemma \ref{l2} we can find a symmetric matrix $A_{k+1}$ satisfying
\begin{align*}
 \|A_{k+1}\|, \|A_{k+1}^{-1}\| \leq K,&\ \quad \det\, A_{k+1}=1, \\
 A_{k+1}\left(B_{\sqrt{h_0/3}} \right)\cap \{x_n> {\tilde\rho}(x')\} \subset &\ S_{h_0}[u_{k+1}]\subset A_{k+1}\left(B_{\sqrt{3h_0}}\right)\cap \{x_n> {\tilde\rho}(x')\}, \\
 A_{k+1}^{-1}\left(B_{\sqrt{{h_0}/{3}}} \right)\cap \{x_n> {\tilde\rho}^*(x')\} \subset &\  \partial u_{k+1}(S_{h_0}[u_{k+1}])\subset A_{k+1}^{-1}\left(B_{\sqrt{3h_0}} \right)\cap \{x_n> {\tilde\rho}^*(x')\}, \\
 \|u_{k+1}-\frac{1}{2}|A_{k+1}^{-1}x|^2\|&_{L^\infty\left(A_{k+1}\left(B_{\sqrt{h_0/3}}\right)\cap\{x_n> {\tilde\rho}(x') \}\right)}\leq \tilde{\eta}_0 h_0,
\end{align*}
where $K$ and $h_0$ are the same as in Lemma \ref{l2}, while $\tilde\rho, \tilde\rho^*$ are the rescaled boundary functions, which tend to be flat as $k\to\infty$.

Let 
\[ M_k:=A_k\cdot\ldots\cdot A_1,\quad\mbox{ for all }k=1,2,\cdots. \]
We obtain a sequence of symmetric matrices satisfying 
\[ \|M_k\|, \|M_k^{-1}\|\leq K^k,\quad\mbox{and}\quad \det\, M_k =1,\quad\forall k\geq1.  \]
From the above iteration we have
\begin{align*}\label{e55}
 M_k\left(B_{(h_0/3)^{k/2}}\right)\cap\{x_n> {\tilde\rho}(x')\}
      \subset &\ S_{h_0^k}[u_1]
      \subset M_k\left(B_{(3h_0)^{k/2}}\right)\cap\{x_n> {\tilde\rho}(x')\}, \\
 M_k^{-1}\left(B_{(h_0/3)^{k/2}}\right)\cap\{x_n> {\tilde\rho}^*(x')\}
        \subset &\ \partial u_1\left(S_{h_0^k}[u_1]\right)
        \subset M_k^{-1}\left(B_{(3h_0)^{k/2}}\right)\cap\{x_n> {\tilde\rho}^*(x')\}. \nonumber
\end{align*}
Hence,
\begin{equation}\label{ho1}
B_{\left(\frac{\sqrt{h_0}}{\sqrt{3}K}\right)^{k}}\cap\{x_n> {\tilde\rho}(x')\}
        \subset S_{h_0^k}[u_1]
        \subset B_{(\sqrt{3}K\sqrt{h_0})^{k}}\cap\{x_n> {\tilde\rho}(x')\}\quad\forall k\geq1. 
\end{equation}

For any given $\beta\in(0,1)$, by choosing $h_0$ and $\delta_0$ small enough we can show that $u_1$ is $C^{1,\beta}$ at the origin, and thus obtain Lemma \ref{alpha1}.
\begin{proof}[Proof of Lemma \ref{alpha1}]
Fix $\beta\in(0,1)$, and let $r_0:={\sqrt{h_0}}/({\sqrt{3}K})$.
From \eqref{ho1} we have
	\[ \|u_1\|_{L^\infty(B_{r_0^k}\cap\{x_n>\tilde\rho(x')\})} \leq h_0^k = (\sqrt{3}Kr_0)^{2k} \leq r_0^{(1+\beta)k}, \]
provided $h_0$ (and so $r_0$) is sufficiently small. 
This implies the $C^{1,\beta}$ regularity of $u_1$ at the origin. 
By rescaling back to the original solution and the arbitrariness of the boundary point $x_0\in\p\Lambda$, we obtain $u\in C^{1,\beta}(\overline\Lambda)$ and finish the proof.
\end{proof}

\vskip10pt

\section{Proof of Theorems \ref{ptheorem} and \ref{2ptheorem}}

\subsection{$C^{2,\alpha}$ estimate}

We can adapt a perturbation argument from \cite[\S5]{CLW} to prove Theorem \ref{ptheorem}.
By changing coordinates and subtracting a linear function, we assume $0\in\p\Lambda$, $u\geq0$, $u(0)=0$ and $Du(0)=0$. 
From Lemma \ref{alpha1}, we see that for any fixed $\epsilon>0$ small, 
$$B_{C^{-1}h^{\frac{1}{2}+\epsilon}}\cap \{x_n> \rhoprime (x')\}
            \subset S_h[u]\subset B_{Ch^{\frac{1}{2}-\epsilon}}\cap \{x_n> \rhoprime (x')\},$$
provided $\delta_0$ is sufficiently small.
Now we construct an approximate solution of $u$ in $S_h[u]$ as follows. 
Denote 
$$D_h^+=S_h[u]\cap \{x_n\geq h^{1-3\epsilon}\} . $$  
When $h>0$ is sufficiently small, we have $D_h^+\Subset  \Lambda.$
Let $u^*$ be the dual potential function, that is the Legendre transform of $u$. 
The proof of Lemma \ref{alpha1} applies also to $u^*$, namely $u^*\in C^{1,\beta}(\overline{\Lambda^*})$ for any given $\beta\in (0,1)$. 
Hence, for any 
$x\in D_h^+$, we have $u_n(x)\geq 0$. Otherwise, one has $\dist(Du(x), \partial \Lambda^*)\lesssim h^{1-2\epsilon}$,
but from the $C^{1,\beta}$ estimate of $u^*$,  
\begin{equation*}
\begin{split}
	\dist(x, \partial\Lambda) &=\dist(Du^*(Du(x)), \partial \Lambda) \\
	&\lesssim h^{(1-2\epsilon)(1-\epsilon)} \ll h^{1-3\epsilon} 
\end{split}
\end{equation*}
provided $h$ is sufficiently small, which contradicts to the definition of $D_h^+ $.  

Let $D_h^-$ be the reflection of $D_h^+$ with respect to the hyperplane $\{x_n=h^{1-3\epsilon}\} $.  
Denote $D_h=D_h^+\cup D_h^- $.  
By the property that $u_n\big|_{D_h^+}\geq 0$, 
we see that $D_h$ is a convex set.
Now, let $w$ be the solution of 
\begin{equation}
\label{eq:MA v}
\begin{cases}
\det(D^2w)=1&\mbox{in $D_h$},\\
w=h&\mbox{on $\partial D_h$}.
 \end{cases}
 \end{equation}
Our proof relies on the following lemma.
\begin{lemma}\label{diff1}
Assume that
$$\left|\frac{f(x)}{g(Du(x))}-1\right|\lesssim h^\tau \ \ \mbox{in}\ \ D_h.$$  
Then we have  
$$\|u-w\|_{L^\infty(D_h\cap \Lambda)}\lesssim h^{1+\tau}.$$
\end{lemma}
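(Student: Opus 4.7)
The statement is a quantitative $L^\infty$-stability estimate for Monge--Amp\`ere with near-constant right-hand side. My plan is to affinely normalize the section $D_h$ to a domain of unit size, reduce to proving a bound of size $h^\tau$ on the normalized difference, and then establish this via a reflection argument (for the upper bound) and a linearization/compactness argument (for the lower bound); rescaling then yields the claimed $h^{1+\tau}$.

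\emph{Normalization.} By Lemma \ref{l2} and the iteration in \S\ref{ss24}, $D_h$ is trapped between two ellipsoids at scale $\sqrt h$ with bounded eccentricity. Let $T$ be a symmetric matrix of unit determinant with $\|T\|,\|T^{-1}\|$ universally bounded that realizes this, and set $\hat u(y):=h^{-1}u(\sqrt h\,Ty)$, $\hat w(y):=h^{-1}w(\sqrt h\,Ty)$ on $\hat D_h:=(\sqrt h\,T)^{-1}D_h$ with $\hat D_h^\pm$ its upper and lower halves. Then $\hat u,\hat w$ are convex on the normalized set $\hat D_h$ (satisfying $B_{1/C}\subset\hat D_h\subset B_C$), both equal $1$ on the image of $\partial D_h$, and $\det D^2\hat u\in[1-Ch^\tau,1+Ch^\tau]$ while $\det D^2\hat w=1$. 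The lemma reduces to showing $\|\hat u-\hat w\|_{L^\infty(\hat D_h^+)}\le Ch^\tau$.

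\emph{Upper bound $\hat u\le\hat w+Ch^\tau$.} Using $u_n\ge 0$ on $D_h^+$ (established above via the Legendre transform $u^*$), the even reflection $\tilde u$ of $\hat u$ across the flat interface $\{y_n=y_n^0\}$ extends to a convex function on all of $\hat D_h$ whose Monge--Amp\`ere measure satisfies $\mu_{\tilde u}\ge(1-Ch^\tau)\,dy$ in the Alexandrov sense---the convex wedge along the interface contributes only nonnegative singular mass---and $\tilde u=1$ on $\partial\hat D_h$ by the symmetry of $\hat D_h$. Compare with $\bar w^-:=(1-C_0h^\tau)(\hat w-1)+1$: for $C_0$ large, $\det D^2\bar w^-=(1-C_0h^\tau)^n\le 1-Ch^\tau$ and $\bar w^-=1$ on $\partial\hat D_h$, so Aleksandrov's comparison principle gives $\tilde u\le\bar w^-\le\hat w+Ch^\tau$ on $\hat D_h$, and restriction to $\hat D_h^+$ yields the claimed bound.

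\emph{Lower bound $\hat w\le\hat u+Ch^\tau$.} This is the main obstacle, since the singular wedge mass of $\tilde u$ obstructs a direct Aleksandrov comparison from above. I would argue by contradiction/compactness: if along some sequence $h_k\to 0$ one had $\epsilon_k:=\|(\hat w_k-\hat u_k)^+\|_{L^\infty(\hat D_{h_k}^+)}/h_k^\tau\to\infty$, the normalized differences $\varphi_k:=(\hat u_k-\hat w_k)/(\epsilon_k h_k^\tau)$ satisfy a linear elliptic equation $A_k^{ij}(\varphi_k)_{ij}=O(1/\epsilon_k)\to 0$, with $A_k^{ij}$ the integrated cofactor of $sD^2\hat u_k+(1-s)D^2\hat w_k$, uniformly elliptic on compact subsets by the interior $C^{2,\alpha}$ estimates for the Monge--Amp\`ere solutions. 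A subsequential limit $\varphi_\infty$ would solve a linear uniformly elliptic equation with zero source, zero Dirichlet data on the outer boundary, and a one-sided Neumann condition $(\varphi_\infty)_n\le 0$ on the flat interface (using $(\hat w_k)_n\equiv 0$ there by symmetry and $(\hat u_k)_n\ge 0$). The strong maximum principle then forces $\varphi_\infty\equiv 0$, contradicting $\|\varphi_k^-\|_\infty=1$; rescaling delivers the stated estimate.
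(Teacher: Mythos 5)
Your upper bound is sound and is essentially a repackaging of the paper's argument: the paper compares $u$ with $\hat w=(1-h^\tau)^{1/n}(w-h)+h$ on $D_h^+$ using the Dirichlet data on $\mathcal{C}_1=\partial D_h^+\cap\{x_n>h^{1-3\epsilon}\}$ and the Neumann comparison $D_n\hat w=0\le D_nu$ on $\mathcal{C}_2=\partial D_h^+\cap\{x_n=h^{1-3\epsilon}\}$, which is morally the same as your even reflection of $u$ (the reflection being convex precisely because $u_n\ge0$, with nonnegative wedge mass).

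The genuine gap is in your lower bound. The linearization/compactness scheme requires the coefficients $A_k^{ij}$ (cofactors of intermediate Hessians) to be uniformly elliptic, and the one-sided Neumann condition to pass to the limit, \emph{up to the interface} $\mathcal{C}_2$ and up to $\partial\Lambda$ --- exactly where no uniform second-derivative control on $u$ is available (the whole point of this lemma is to bootstrap from $C^{1,\beta}$ toward $C^{1,1}$ and $C^{2,\alpha}$, so you cannot assume it). You also give no uniform modulus of continuity for $\varphi_k$ to extract a nontrivial limit, and the delicate corner region $\overline{\mathcal{C}_2}\cap\partial\Lambda$ is untouched. The paper closes the lower bound by a purely elementary barrier, and the ingredient you are missing is quantitative: besides $D_nu\ge0$ on $\mathcal{C}_2$ one also has $D_nu\le C_1h^{1-4\epsilon}$ there, proved by combining $Du(\partial\Lambda)\subset\partial\Lambda^*$ with the $C^{1,1-\epsilon}$ estimates for $u$ and $u^*$ from Lemma \ref{alpha1} (this is exactly the bound on the ``wedge mass'' that you correctly identify as the obstruction). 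With it one takes
$$\check w=(1+h^\tau)^{1/n}(w-h)+h+C_1\bigl(x_n-Ch^{1/2-\epsilon}\bigr)h^{1-4\epsilon},$$
which satisfies $\det D^2\check w\ge\det D^2u$ in $D_h^+$, $\check w\le u$ on $\mathcal{C}_1$, and $D_n\check w=C_1h^{1-4\epsilon}>D_nu$ on $\mathcal{C}_2$; the mixed Dirichlet--Neumann comparison principle then gives $u\ge\check w$, and the correction term is of order $h^{3/2-5\epsilon}\ll h^{1+\tau}$ since $\tau<1/2$. Finally, note the lemma asserts the estimate on $D_h\cap\Lambda$, not just $D_h^+$: you still need the short additional step of reflecting points of $D_h^-\cap\Lambda$ to $D_h^+$ and using $w(x)=w(z)$ together with $|u(x)-u(z)|\le\|Du\|_{\infty}|x-z|\le Ch^{3/2-3\epsilon}$.
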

	  
\begin{proof}
The proof uses a similar idea as that of Theorem \ref{main} (i) in \cite[\S5]{CLW}.
Divide $\partial D_h^+ = \mathcal{C}_1\cup\mathcal{C}_2$ into two parts, 
where $\mathcal{C}_1\subset\{x_n> h^{1-3\epsilon}\}$ and $\mathcal{C}_2\subset\{x_n= h^{1-3\epsilon}\}$.  
On $\mathcal{C}_1$ we have $u=w$.  
On $\mathcal{C}_2$, by symmetry we have $D_nw=0$.  
We \emph{claim} that $0\leq D_nu \leq C_1h^{1-4\epsilon}$ on $\mathcal{C}_2$, 
for any given small $\epsilon>0$.  

To see this, for any $x=(x',x_n)\in \mathcal{C}_2 $,  let $z=(x', \rhoprime (x'))$ be the point on $\partial\Lambda $.  
Since $Du(\partial \Lambda)\subset \partial \Lambda^*$ 
and $u\in C^{1,1-\epsilon}(\bar{\Lambda})$, for any $\epsilon\in(0,1)$ (by Lemma \ref{alpha1}), 
it is straightforward to compute that $|D_nu(z)|\leq Ch^{2(\frac{1}{2}-\epsilon)(1-\epsilon)} $.  On the
other hand $|D_nu(x)-D_nu(z)|\leq Ch^{(1-3\epsilon)(1-\epsilon)} $.  Hence $0\leq D_nu(x) \leq C_1h^{1-4\epsilon}$, provided $\epsilon$ is sufficiently small.

Let 
$$\hat w = (1-h^\tau)^{1/n}w-(1-h^\tau)^{1/n}h+h, $$
and 
$$\check w = (1+h^\tau)^{1/n}w-(1+h^\tau)^{1/n}h+h+C_1(x_n-Ch^{1/2-\epsilon})h^{1-4\epsilon}. $$ 
Then
	\begin{align*}
		\det\,D^2\hat w \leq \det\,D^2 u \leq \det\,D^2\check w  & \quad\mbox{ in }  D_h^+,\\
			\check w\leq u=\hat w  = h &\quad\mbox{ on }\mathcal{C}_1, \\
		D_n\hat w =0 < D_n u <D_n\check w&\quad\mbox{ on }\mathcal{C}_2.
	\end{align*}
By comparison principle, we have $\hat w \geq u\geq \check w$ in $D_h^+$.  

Since $h>0$ is small, $\tau<1/2$, and $\epsilon>0$ is  small, we obtain
	\begin{equation}\label{coreL}
		|u-w| \leq Ch^{1+\tau} \qquad\mbox{in } D_h^+. 
	\end{equation}

\vspace{5pt}

Next, we estimate $|u-w|$ in $ D_h\cap \Lambda$.  
For $x=(x',x_n)\in D_h^-\cap \Lambda$, let $z=(x', 2h^{1-3\epsilon}-x_n) \in D_h^+$.
Then  $|x-z|\leq Ch^{1-3\epsilon}$.  
From \eqref{coreL}, $|u(z)-w(z)| \leq Ch^{1+\tau}$.
Since $w$ is symmetric with respect to $\{x_n=h^{1-3\epsilon}\}$, we have  $w(x)=w(z)$.
Since $u\in C^{1,1-\epsilon}(\bar{\Lambda})$, we obtain
	\begin{equation*}
		|u(x)-u(z)| \leq \|Du\|_{L^\infty(D_h)} |x-z| \leq Ch^{3/2-3\epsilon}.
	\end{equation*}
Therefore, for the given constant $\tau\in(0,\frac12)$,  
	\begin{equation*}
		|u(x)-w(x)| \leq |u(x)-u(z)| + |u(z)-w(z)| \leq Ch^{1+\tau}.
	\end{equation*}
Combining with \eqref{coreL} we thus obtain the desired $L^\infty$ estimate
	\begin{equation}\label{coreLL}
		|u-w| \leq Ch^{1+\tau}\qquad\mbox{in }  D_h\cap \Lambda
	\end{equation}
\end{proof}

Having Lemma \ref{diff1} in hand,
we can prove Theorem \ref{ptheorem} by following  the proof of Theorem \ref{main} (i) as in \cite[\S5]{CLW}.
Here, we outline the main steps as follows. 
\begin{proof}[Proof of Theorem \ref{ptheorem}]
Let $D_k=D_{h_k},$ where $h_k= 4^{-k}$, $k=0, 1,2,\cdots.$  
To obtain the $C^{1,1}$ estimate of $u$ is equivalent to show that $D_k$ has good shape for all $k$, namely the ratio of the largest radius and the smallest radius of its minimal ellipsoid is uniformly bounded. 
This is done by an induction argument.

Let $u_k$, $k=0,1,2, \cdots$, be the convex solution of
	\begin{align}
		\det\,D^2 u_k = 1 &\qquad\mbox{in }D_k,\\
		u_k = h_k & \qquad\mbox{on }\partial D_k. \nonumber
	\end{align}
Suppose $D_k$ has good shape for all $k\leq N.$
Then by Lemma \ref{diff1} and Schauder estimate (see \cite[Lemma 5.4]{CLW}), we have
	\begin{equation}\label{in1}
		|D^2u_i(x)-D^2u_{i+1}(x)| \leq Ch_{i}^\tau,
	\end{equation}
	for $x\in D_{i+2}$ and $0\leq i\leq N$, where the constant $\tau=\frac{\alpha}{2}\in(0,\frac12)$. 
Therefore, 
\begin{equation}\label{induc}
|D^2u_{N+1}(0)|\leq |D^2u_0(0)|+ \sum_{i=0}^N|D^2u_i(0)-D^2u_{i+1}(0)|\leq C+ \sum_{i=0}^N Ch_i^\tau\leq C_1
\end{equation}
 for some universal constant $C_1.$
Then, by \cite[Lemma 5.3]{CLW} we have that $D_{N+1}$ also has good shape. Hence by induction on $k$ we see that $D_k$ has good shape for all $k.$

To obtain the $C^{2,\alpha}$ estimate, for any given point $z\in\overline\Lambda$ near the origin such that $4^{-k-4}\leq u(z)\leq 4^{-k-3}$
we only need to estimate $$|D^2u(z)-D^2u(0)|\leq |D^2u(z)-D^2u_k(z)|+|D^2u_k(z)-D^2u_k(0)|+|D^2u_k(0)-D^2u(0)|.$$
Since $f\in C^\alpha(\overline\Lambda)$, $g\in C^\alpha(\overline{\Lambda^*})$, similarly as in \cite[\S5]{CLW} we can obtain
\[ |D^2u(z)-D^2u(0)| \leq C|z|^\alpha, \]
which gives the H\"older continuity of $D^2u$ at the boundary. 
Combining with the interior $C^{2,\alpha}$ estimates in \cite{C1,JW}, we have the global $C^{2,\alpha}$ regularity in Theorem \ref{ptheorem}. 
\end{proof}

\vskip5pt

\subsection{$W^{2,p}$ estimate}

When the densities $f, g$ satisfy  \eqref{BC}, \eqref{fg} and are continuous, 
we can also obtain the global $W^{2,p}$ estimate in Theorem \ref{2ptheorem}.
As seen in \S\ref{s2}, for the H\"older continuous densities, the solution $u$ can be approximated by $\tilde u\in C^{2,\alpha}$ that is a potential function over convex domains, and thus one has \eqref{cd0} at the initial step. 
For continuous densities, we do not have the initial estimate \eqref{cd0} since such an approximate solution may not even be $C^{1,1}$. 

To overcome this difficulty we use a similar technique as in \cite{Chen} by directly exploiting the sub-level sets. 
Eventually we can also establish Lemma \ref{alpha1} for continuous densities. 
Once having $u\in C^{1,\beta}(\overline\Lambda)$ for all $\beta\in(0,1)$, the global $W^{2,p}$ estimate follows from the argument as in \cite{Sa1}, (see also the proof of Theorem \ref{main} (ii) in \cite{CLW}).

\begin{proof}[Proof of Theorem \ref{2ptheorem}]
In order to obtain Lemma \ref{alpha1} for continuous densities, we follow the four steps as in \S\ref{s2}. 
First, let $\tilde u$ be the potential function of optimal transport from $(\Omega, \tilde f)$ to $(\Omega^*, \tilde g)$, where $\Om, \Om^*$ are convex domains and $\tilde f, \tilde g$ are the extended continuous densities. 
Let $0\in \p\Lambda$, without loss of generality we may assume that $0=u(0)=\tilde u(0)$ and $D\tilde u(0)=0$.
By a compactness argument as in Lemma \ref{vclose}, we have
	\[ \|u-\tilde u\|_{L^\infty(\Lambda)} \leq \omega(\delta) \]
for a nondecreasing function $\omega:\R_+\to\R_+$ with $\omega(\delta)\to0$ as $\delta\to0$. 

Next we localise the problem by normalising a small sub-level set $S_h[\tilde u]$ of $\tilde u$ at the origin, where $h>0$ is a fixed small constant. 
From Theorem \ref{main} (ii), there is a universal constant $C$ and a unimodular matrix $A$ such that 
\begin{align*}
 A(B_{\sqrt{h}/C})\cap\Om \subset &\ S_h[\tilde u] \subset A(B_{C\sqrt{h}})\cap\Om,\\
 \|A\|, \|A^{-1}\| &\ \leq Ch^{-\epsilon},
\end{align*}
where $\epsilon>0$ can be as small as we want. 
Make the rescaling $x\mapsto \frac{1}{\sqrt{h}}A^{-1}x$ and define
\begin{align*}
 u_1(x)&=\frac{1}{h}u(\sqrt{h}Ax),\\
  \tilde u_1(x)&=\frac{1}{h}\tilde u(\sqrt{h}Ax), 
\end{align*}
and accordingly
\begin{align*}
 f_1(x)&=f(\sqrt{h}Ax),\\ 
 g_1(x)&=g(\sqrt{h}(A^t)^{-1}x),
\end{align*}
where $A^t$ is the transpose of $A$. 
Correspondingly, the domains become $\Om_1=\frac{1}{\sqrt{h}}A^{-1}\Om$ and $\Om^*_1=\frac{1}{\sqrt{h}}A^t\Om^*$.
From the proof of Lemma \ref{rele}, one has the scaled densities $f_1, g_1$ tend to the constant and the domains $\Om_1, \Om^*_1$ tend to be flat near the origin, as $h\to0$. 
Moreover, for any given $\eta_0>0$ small, one has
\beqs
\|u_1-\tilde u_1\|_{L^\infty(S_1[\tilde u_1])} \leq \omega\left(\frac{\delta}{h}\right) < \eta_0,
\eeqs
provided $\delta$ is sufficiently small. 

Then we construct an optimal transport problem with constant densities. 
Similarly as in \S\ref{ss23}, define the domains $\mathcal{D}_1$ and $\mathcal{D}_2$.
Let $w$ be the convex function satisfying $w(0)=u_1(0)=0$ and
\beq\label{mae02}
\bigg\{{\begin{aligned}
	 &\det\, D^2w(x) = 1 \ \ \text{in}\ \mathcal D_1,\\
	 &Dw(\mathcal D_1) =\mathcal D_2.
	 \end{aligned}} \eeq
Analogous to \eqref{cpt11} and \eqref{ap2}, we can then obtain
\beqs
\|w-u_1\|_{L^\infty(B_{{1}/{3}}\cap \{x_n>{\tilde\rho}(x')\})} 
    \leq \omega(\delta)\rightarrow 0,\quad \ \text{as}\ \  \delta\rightarrow 0.
\eeqs
and thus
\beqs
 \|w-\tilde u_1\|_{L^\infty(B_{{1}/{4}}\cap \{x_n>{\tilde\rho}(x')\}  )}
      \leq \omega(\delta)+\eta_0\rightarrow 0,\quad \ \text{as}\ \  \delta, \eta_0\rightarrow 0.
\eeqs
From \cite[Lemma 3.4]{Chen} and $\tilde u_1\in C^{1,\beta}$ for all $\beta\in(0,1)$, one has for any $x\in B_{{1}/{5}}\cap \{x_n>{\tilde\rho}(x')\}$ and for any $p\in\p w(x)$,
\beqs
\big|p-D\tilde u_1(x)\big| \leq C\big( \omega(\delta)+\eta_0 \big)^{\beta/2}.
\eeqs
Hence, by the symmetry of $w$, we have $\partial w(B_{1/5})\subset B_{1/4}$, provided $\delta, \eta_0$ are sufficiently small.
Since $w$ is a solution to \eqref{mae02}, by the interior estimates \cite{GT} we obtain 
\beq\label{intC3}
\|w\|_{C^3(B_{1/5})}\leq C
\eeq
for a universal constant $C>0$. 

Once having the smooth approximate solution $w$, we can similarly obtain Lemma \ref{l2}, namely $u_1$ is close to a parabola given by the second order Taylor expansion of $w$ in a small sub-level set $S_{h_0}[u_1]$. 
By the iteration argument, we then have \eqref{ho1}, which implies Lemma \ref{alpha1}, namely for any given $\beta\in(0,1)$, there is a small constant $\delta_0>0$ such that the original solution $u\in C^{1,\beta}(\overline\Lambda)$ provided $\delta<\delta_0$. 
Finally, the global $W^{2,p}$ estimate can be obtained by using a covering argument from \cite{Sa1}, see \cite{CF, CLW} for more details.
\end{proof}

\vskip10pt

\section{Some applications}

In the last section we give some interesting applications of our global regularity of optimal mappings in non-convex domains.  

\subsection{Free boundary problem}
As in \cite{Chen1} we discuss a model of free boundary problem arising in optimal transportation.

Let $\Lambda$ and $\Lambda^*$ be two bounded domains in $\R^n$, associated with densities $f$ and $g$, respectively. 
Let $m$ be a positive number satisfying
\beq\label{pmass}
	m\leq \min\left\{ \int_\Lambda f,\ \int_{\Lambda^*}g \right\}.
\eeq
Let the cost be the quadratic cost. 
The optimal partial transport problem asks for the optimal mapping that minimising the cost transporting mass $m$ from $\Lambda$ to $\Lambda^*$. 
The portion $U\subset\Lambda$ been transported is called the \emph{active region}.
In \cite{CM}, Caffarelli and McCann proved that the free boundary $\p U\cap\Lambda$ is $C^{1,\alpha}$. 
Assuming $\Lambda, \Lambda^*$ are $C^2$, uniformly convex, and the distance $\dist(\Lambda, \Lambda^*)$ is sufficiently large, the first author \cite{Chen1} obtained the $C^{2,\alpha}$ regularity of the free boundary $\p U\cap\Lambda$. 

The key observation is that when $\dist(\Lambda, \Lambda^*)$ is sufficiently large, for any $x\in\Lambda, y\in\Lambda^*$, $\frac{y-x}{|y-x|}$ is uniformly close to some unit vector $e$. It is known that for $x\in\p U\cap\Lambda$, the unit inner normal of the free boundary $\p U$ is given by
\beqs
\nu(x)=\frac{Du(x)-x}{|Du(x)-x|}\to e,\quad\mbox{ as }\quad \dist(\Lambda, \Lambda^*)\to\infty.
\eeqs
Therefore, when $\dist(\Lambda, \Lambda^*)$ is sufficiently large, the active region $U$ is a small perturbation of a convex domain. 
By applying our Theorems \ref{ptheorem} and \ref{2ptheorem}, we can obtain the following:

\begin{corollary}
Let $\Lambda$  and $\Lambda^*$ be $C^{1,1}$ domains that are
$\delta$-close to $\Omega$  and $\Omega^*$ in $C^{1,1}$ norm, respectively,
where $\Om$ and $\Om^*$ are bounded convex domains with $C^{1,1}$ boundaries. 
Let $m$ satisfying \eqref{pmass} be the mass to transport, and $U$ be the active region.
Then:
\begin{itemize}
\item[(i)] when $f, g$ are continuous, for any given $\beta\in(0,1)$, there exists a small constant $\delta_0>0$ and a large constant $L$ such that $\p U\cap\Lambda$ is $C^{1,\beta}$, provided $\delta<\delta_0$ and $\dist(\Lambda, \Lambda^*)\geq L$;

\item[(ii)] when $f, g$ are $C^\alpha$ for some $\alpha\in(0,1)$, there exists a small constant $\delta_0>0$ and a large constant $L$ such that $\p U\cap\Lambda$ is $C^{2,\alpha}$, provided $\delta<\delta_0$ and $\dist(\Lambda, \Lambda^*)\geq L$.
\end{itemize}
\end{corollary}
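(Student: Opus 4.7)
The plan is to recognize that the optimal partial transport, restricted to the active region $U\subset\Lambda$ with target $V:=Du(U)\subset\Lambda^*$, is a standard optimal transport between the pair $(U,V)$, and then to verify that $U$ and $V$ themselves satisfy the hypotheses of Theorems~\ref{ptheorem} and~\ref{2ptheorem} when $L=\dist(\Lambda,\Lambda^*)$ is large and $\delta$ is small. Once this structural fact is established, the global regularity of the potential $u$ on $\overline U$ will transfer to the free boundary via the explicit formula for its inner normal.

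First, recall from \cite{CM} that the active regions $U,V$ are open, the optimal partial transport restricts to an honest optimal transport from $(U,f|_U)$ to $(V,g|_V)$, and at every $x\in\p U\cap\Lambda$ the inner unit normal is given by
\[
\nu(x)=\frac{Du(x)-x}{|Du(x)-x|}.
\]
For large $L$, every secant $y-x$ with $x\in\Lambda$, $y\in\Lambda^*$ is nearly parallel to the average displacement direction $e$: fixing coordinates so that $e=e_n$, a direct computation gives $|\nu(x)-e_n|\leq C/L$ on the free boundary. Hence $\p U\cap\Lambda$ is the graph of a $C^{1,\alpha}$ function $\rho$ (from \cite{CM}) whose gradient is uniformly small. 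Combined with the $\delta$-closeness of $\p\Lambda$ to the $C^{1,1}$ boundary of the convex set $\Omega$, this produces a bijection $\Phi$ between $U$ and the convex $C^{1,1}$ set $\Omega\cap\{x_n<t_0\}$ with $\|\Phi-I\|_{C^{1,1}}\leq\delta'$, where $\delta'=\delta'(\delta,L)\to 0$ as $\delta\to 0$ and $L\to\infty$; a symmetric argument treats $V$ against a convex subset of $\Omega^*$.

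With this structural fact in hand, I apply Theorem~\ref{2ptheorem} in the continuous-density case to obtain $u\in C^{1,\beta}(\overline U)$ for any $\beta\in(0,1)$, and Theorem~\ref{ptheorem} in the $C^\alpha$-density case to obtain $u\in C^{2,\alpha}(\overline U)$. Differentiating $\nu(x)=(Du(x)-x)/|Du(x)-x|$, whose denominator is bounded below by $L-\diam(\Lambda\cup\Lambda^*)$ and hence uniformly positive, the normal field on $\p U\cap\Lambda$ inherits exactly the regularity of $Du$; an implicit-function argument then upgrades the graph $x_n=\rho(x')$ to $C^{1,\beta}$ in case~(i) and to $C^{2,\alpha}$ in case~(ii), yielding the corollary.

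The hard part will be the bootstrap in the second paragraph: \cite{CM} a priori gives only $C^{1,\alpha}$ regularity of $\rho$, whereas the notion of $\delta'$-closeness used in Theorems~\ref{ptheorem}--\ref{2ptheorem} is $C^{1,1}$. A clean way to overcome this is to first run a weaker version of the compactness step (Lemma~\ref{vclose}) that only needs $\rho\in C^{1,\alpha}$ with small norm, obtain an intermediate $u\in C^{1,\beta}(\overline U)$, and then use the formula for $\nu$ together with the improved regularity of $Du$ to recover $C^{1,1}$ control on $\rho$ before invoking the full strength of Theorems~\ref{ptheorem} and~\ref{2ptheorem}. A parallel bootstrap must be run simultaneously on the dual side for $V\subset\Lambda^*$, which is harmless because the dual potential $u^*$ obeys exactly the same structural estimates.
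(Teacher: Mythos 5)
Your overall route is the same as the paper's: the entire argument for this corollary is the observation that for large $\dist(\Lambda,\Lambda^*)$ the free-boundary normal $\nu=(Du-x)/|Du-x|$ is uniformly close to a fixed direction $e$, so the active regions are small perturbations of convex domains and Theorems \ref{ptheorem} and \ref{2ptheorem} apply to the restricted transport from $(U,f)$ to $(V,g)$, after which the formula for $\nu$ transfers the regularity of $Du$ to the free boundary. You have also correctly identified the one technical obstruction the paper passes over in silence, namely that \cite{CM} only gives $C^{1,\alpha}$ regularity of the free boundary while the theorems are stated for $C^{1,1}$ perturbations of $C^{1,1}$ convex domains. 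Two points in your execution need repair.

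First, the bootstrap in your last paragraph cannot terminate where you say it does: from the intermediate estimate $u\in C^{1,\beta}(\overline U)$ with $\beta<1$, the formula for $\nu$ only returns $\rho\in C^{1,\beta}$, never $C^{1,1}$, so you can never ``recover $C^{1,1}$ control on $\rho$'' and then invoke the theorems as black boxes. What actually closes the argument is that the proofs of Lemma \ref{alpha1} and of the $C^{2,\alpha}$ step tolerate boundaries that are merely $C^{1,\beta}$ with $\beta$ close to $1$ and close to flat: the boundary regularity enters only through flatness rates (e.g. $|\rho(x')|\lesssim|x'|^{1+\beta}$ in the rescaling \eqref{cd2}, and $|D_nu(z)|\lesssim h^{(1+\beta)(\frac12-\epsilon)(1-\epsilon)}$ in place of $h^{2(\frac12-\epsilon)(1-\epsilon)}$ on $\mathcal{C}_2$), and every such estimate already carries an $\epsilon$-loss. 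So the correct statement is that you must rerun the proofs under the weakened boundary hypothesis, not cite the theorems verbatim. Second, your model convex domain $\Omega\cap\{x_n<t_0\}$ is not a $C^{1,1}$ domain: it has an edge where the hyperplane meets $\partial\Omega$, and $\partial U$ has the corresponding corner where the free boundary meets $\partial\Lambda$. Hence even the weakened theorems cannot be applied globally on $\overline U$, and your claims $\|\Phi-I\|_{C^{1,1}}\leq\delta'$ on all of $U$ and $u\in C^{2,\alpha}(\overline U)$ are too strong. This is precisely why the corollary asserts only interior regularity of $\partial U\cap\Lambda$: the argument must be localized near a point of $\partial U\cap\Lambda'$ with $\Lambda'\Subset\Lambda$, where the corner is at a fixed positive distance and the localized versions of Lemmas \ref{vclose}--\ref{l2} apply.
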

Note that the above regularity is interior regularity, namely for any $\Lambda'\Subset\Lambda$, the $C^{1,\beta}$ (or $C^{2,\alpha}$) norm of $\p U\cap\Lambda'$ depends also on the domain $\Lambda'$.

\subsection{A singularity model}

Consider an optimal transport problem from a source domain $\Lambda$ with density $f$ to the target $\Lambda^*=\Lambda^*_1\cup\Lambda^*_2$ with density $g$, where $\Lambda^*_1$ and $\Lambda^*_2$ are two domains separated by a hyperplane $H$, and the densities satisfy $C^{-1}\leq f, g \leq C$ and
\beqs
\int_\Lambda f = \int_{\Lambda^*}g. 
\eeqs
Let the cost be the quadratic cost, $u$ be the convex potential of the optimal transport from $(\Lambda, f)$ to $(\Lambda^*,g)$.
Then its Legendre transform $u^*$ is the convex potential of the optimal transport from $(\Lambda^*,g)$ to $(\Lambda, f)$.
In \cite{Chen1}, it was proved that the domains $U_1:=\p u^*(\Lambda^*_1)$ and $U_2:=\p u^*(\Lambda^*_2)$ are separated by the free boundary $\mathcal{F}\subset \Lambda$, and when $\dist(\Lambda^*_1, \Lambda^*_2)$ is sufficiently large, the free boundary $\mathcal{F}$ is close to a hyperplane. See also some related discussion in \cite{KM}.
Hence, by applying our Theorems \ref{ptheorem} and \ref{2ptheorem}, we have:
\begin{corollary}
Let $\Lambda, \Lambda^*_1, \Lambda^*_2$ be $C^{1,1}$ domains that are $\delta$-close to $\Omega, \Omega^*_1, \Omega^*_2$ in $C^{1,1}$ norm, respectively,
where $\Om, \Om^*_1, \Om^*_2$ are bounded convex domains with $C^{1,1}$ boundaries. 
Let $\Lambda'\Subset\Lambda$, then:
\begin{itemize}
\item[(i)] when $f, g$ are continuous, for any given $\beta\in(0,1)$, there exists a small constant $\delta_0>0$ and a large constant $L$ such that $\mathcal{F} \cap\Lambda'$ is $C^{1,\beta}$, provided $\delta<\delta_0$ and $\dist(\Lambda, \Lambda^*)\geq L$;

\item[(ii)] when $f, g$ are $C^\alpha$ for some $\alpha\in(0,1)$, there exists a small constant $\delta_0>0$ and a large constant $L$ such that $\mathcal{F}\cap\Lambda'$ is $C^{2,\alpha}$, provided $\delta<\delta_0$ and $\dist(\Lambda, \Lambda^*)\geq L$.
\end{itemize}
\end{corollary}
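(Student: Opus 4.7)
The plan is to reduce the regularity of $\mathcal{F}$ to the regularity of two separate Brenier potentials $u_1, u_2$, to which I apply Theorems \ref{ptheorem}--\ref{2ptheorem}, and then extract $\mathcal{F}$ via an implicit function argument.

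First I would split the problem. Define the one-sided potentials $u_i(x) := \sup_{y \in \overline{\Lambda^*_i}}\bigl(x\cdot y - u^*(y)\bigr)$ for $i=1,2$. Each $u_i$ is convex on $\mathbb{R}^n$, and on $U_i$ one has $u_i = u$ with $Du_i = Du$ pushing $f|_{U_i}$ forward to $g|_{\Lambda^*_i}$; this restriction property, combined with the global mass balance, yields $\int_{U_i} f = \int_{\Lambda^*_i} g$. Globally $u = \max(u_1, u_2)$ and $\mathcal{F} = \{x \in \Lambda : u_1(x) = u_2(x)\}$.

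Next I would control the geometry of $U_i$ and invoke the main theorems. Following the argument of \cite{Chen1}, at any point $x \in \mathcal{F}$ the subdifferential $\partial u(x)$ contains both a point $y_1 \in \overline{\Lambda^*_1}$ and a point $y_2 \in \overline{\Lambda^*_2}$, and $(y_2-y_1)/|y_2-y_1|$ is the outward normal to $U_1$ at $x$; when $L := \dist(\Lambda^*_1,\Lambda^*_2)$ is large, this unit vector is forced to stay within $O(1/L)$ of a fixed direction $e$. Combined with $\delta$-closeness of $\Lambda^*_i$ to convex $\Omega^*_i$, this yields that $\mathcal{F}$ is the graph of a $C^{1,1}$ function over the hyperplane $H := e^{\perp}$ whose $C^{1,1}$ norm is of size $\delta + 1/L$, hence each $U_i$ is $(\delta + 1/L)$-close in $C^{1,1}$ norm to the convex set $V_i := \Omega \cap H^+_i$, away from the corner $\partial\Omega \cap H$. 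Since $\Lambda' \Subset \Lambda$, I localize by modifying $U_i$ outside a fixed neighborhood of $\mathcal{F} \cap \Lambda'$ to obtain a $C^{1,1}$ domain $\tilde{U}_i$ that agrees with $U_i$ near $\mathcal{F} \cap \Lambda'$ and is globally $\delta'$-close to a $C^{1,1}$ convex domain. Applying Theorem \ref{ptheorem} (resp.\ Theorem \ref{2ptheorem}) to the transport from $(\tilde{U}_i, f)$ to $(\Lambda^*_i, g)$ then yields $u_i \in C^{2,\alpha}$ (resp.\ $C^{1,\beta}$ and $W^{2,p}$) up to $\mathcal{F} \cap \Lambda'$ from the $U_i$ side.

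Finally I would apply the implicit function theorem. Extend each $u_i|_{\overline{U_i}}$ across $\mathcal{F}$ by Whitney extension to $\tilde{u}_i \in C^{2,\alpha}$ (resp.\ $C^{1,\beta}$) on a full neighborhood of $\mathcal{F} \cap \Lambda'$, and set $\phi := \tilde{u}_1 - \tilde{u}_2$. Locally $\mathcal{F} \cap \Lambda' = \{\phi = 0\}$, and the one-sided trace $Du_1 - Du_2$ on $\mathcal{F}$ joins two points of distance $\geq L$ lying respectively in $\overline{\Lambda^*_1}$ and $\overline{\Lambda^*_2}$, so $|\nabla\phi| \geq L/2$ along $\mathcal{F}$ with direction close to $e$. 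The implicit function theorem then produces the claimed regularity of $\mathcal{F} \cap \Lambda'$. The main obstacle is the geometric control step: in the non-convex setting one must carefully combine the subdifferential argument of \cite{Chen1} (which uses convexity of $\Lambda^*_i$) with the compactness and perturbation reasoning from \S\ref{ss21}--\S\ref{ss23} in order to quantify the $C^{1,1}$-closeness of $\mathcal{F}$ to $H$; the corner of $V_i$ at $\partial\Omega \cap H$ is a secondary technicality resolved by the compact containment $\Lambda' \Subset \Lambda$.
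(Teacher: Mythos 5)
Your proposal is correct and follows essentially the same route as the paper, which likewise decomposes the target into $\Lambda^*_1,\Lambda^*_2$, invokes \cite{Chen1} to conclude that for $\dist(\Lambda^*_1,\Lambda^*_2)$ large the free boundary $\mathcal{F}$ is nearly a hyperplane so that each $U_i$ is a small perturbation of a convex domain, and then applies Theorems \ref{ptheorem} and \ref{2ptheorem}. Your writeup is in fact more detailed than the paper's one-paragraph sketch, in particular in making explicit the implicit-function step (using that $|Du_1-Du_2|\geq L/2$ across $\mathcal{F}$) and in flagging the quantitative $C^{1,1}$-closeness of $\mathcal{F}$ to the hyperplane as the point requiring care.
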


\subsection{Minimal Lagrangian diffeomorphisms}

In \cite{Wo}, Wolfson studied minimal Lagrangian diffeomorphisms between simply connected domains in $\R^2$. 
The problem is as follows: given $D_1$ and $D_2$ two simply connected domains in $\R^2$ with smooth boundaries and with equal areas, find a diffeomorphism $\psi : D_1\to D_2$ smooth up to the boundaries such that the graph of $\psi$ is a minimal Lagrangian surface in $\R^4$. 
Such a mapping $\psi$ is called a \emph{minimal Lagrangian diffeomorphism} from $D_1$ to $D_2$. 

An equivalent statement is that there is a solution $u\in C^\infty(\overline D_1)$ of the second boundary problem for the Monge-Amp\`ere equation
\beqs 
\bigg\{{\begin{aligned}
	 &\det\, D^2u = 1 \ \ \text{in}\ D_1,\\
	 &Du(D_1) = D_2.
	 \end{aligned}}
\eeqs
The equivalency can be seen that by choosing a suitable Lagrangian angle, the diffeomorphism $\psi = Du$ mapping from $D_1$ to $D_2$. 

Under the assumption that both $\p D_1$ and $\p D_2$ have positive curvatures, the existence of global smooth solutions was proved by Delano\"e \cite{D91}. The higher dimensional analogue of Delano\"e's result was proved by Caffarelli \cite{C96} and Urbas \cite{U1}. By our recent result Theorem \ref{main} in \cite{CLW}, we show that both $\p D_1$ and $\p D_2$ have non-negative curvatures guarantees the existence of minimal Lagrangian diffeomorphisms $\psi$. 
By applying Theorem \ref{ptheorem} we are able to further relax the assumption to the following:
\begin{corollary}
Assume that $D_1, D_2$ are $\varepsilon$-close to smooth convex domains $\Om_1, \Om_2$, respectively.  
Then there exists a small constant $\varepsilon_0>0$ such that 
there exists a minimal Lagrangian diffeomorphim $\psi : D_1\to D_2$, provided $\varepsilon<\varepsilon_0$.
\end{corollary}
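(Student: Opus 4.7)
The plan is to interpret this corollary as the special case of Theorem~\ref{ptheorem} with constant densities, followed by a standard regularity bootstrap to $C^\infty$. As recalled just before the statement, finding a minimal Lagrangian diffeomorphism $\psi:D_1\to D_2$ amounts to producing a smooth convex potential $u$ on $\overline{D_1}$ that solves $\det D^2u=1$ in $D_1$ with the second boundary condition $Du(D_1)=D_2$, and then setting $\psi=Du$. Since $|D_1|=|D_2|$, this is precisely the optimal transport problem \eqref{ma1}--\eqref{bdry} with source and target densities $f\equiv g\equiv 1$, which trivially satisfy \eqref{BC} and \eqref{fg}. Brenier's theorem \cite{Bre91} then supplies a convex potential $u$ solving this problem in the generalised sense.

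Next I would apply Theorem~\ref{ptheorem} directly. The hypotheses are satisfied: the smooth convex domains $\Om_1,\Om_2$ are in particular $C^{1,1}$; the domains $D_1,D_2$ are $\varepsilon$-close to them in $C^{1,1}$ norm by assumption; and constant densities are H\"older continuous for every $\alpha\in(0,1)$. Hence, taking $\varepsilon$ smaller than the threshold $\delta_0=\delta_0(\Om_1,\Om_2,\alpha)$ furnished by the theorem, we obtain $u\in C^{2,\alpha}(\overline{D_1})$. Combined with the equation $\det D^2u\equiv 1$ and the convexity of $u$, this forces $D^2u$ to be uniformly bounded from above and below on $\overline{D_1}$, so that $u$ is strictly convex and $\psi:=Du$ is already a $C^{1,\alpha}$-diffeomorphism of $\overline{D_1}$ onto $\overline{D_2}$.

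The remaining step is to bootstrap to $u\in C^\infty(\overline{D_1})$. With $D^2u$ uniformly positive and bounded, the Monge-Amp\`ere equation becomes a uniformly elliptic equation with constant right-hand side, and the condition $Du(\partial D_1)=\partial D_2$ reads as a nonlinear oblique boundary condition $h(Du)=0$, where $h$ is a smooth defining function of $\partial D_2$; obliqueness follows from the strict convexity of $u$ together with the near-convexity of $D_2$. Differentiating the equation and the boundary condition tangentially and invoking the linear Schauder theory for oblique problems iteratively, as done in \cite{D91,C96,U1}, produces the desired $C^\infty$ regularity. The map $\psi=Du$ is then a smooth diffeomorphism of $\overline{D_1}$ onto $\overline{D_2}$, and by the equivalence recalled above it is a minimal Lagrangian diffeomorphism.

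The crux of the argument is the global $C^{2,\alpha}$ estimate up to the boundary \emph{without} any convexity assumption on $D_1$ or $D_2$; this is precisely the new content of Theorem~\ref{ptheorem} in this non-convex setting, and without it the classical arguments of \cite{D91,C96,U1} do not apply directly. Once this estimate is in hand, the bootstrap is routine.
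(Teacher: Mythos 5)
Your proposal is correct and follows exactly the route the paper intends (the paper states this corollary without a written proof, but the surrounding discussion makes clear the argument is precisely yours): identify the minimal Lagrangian problem with the second boundary value problem $\det D^2u=1$, $Du(D_1)=D_2$ for unit densities, apply Theorem~\ref{ptheorem} to get $u\in C^{2,\alpha}(\overline{D_1})$ without convexity of $D_1,D_2$, and bootstrap to $C^\infty$ via the oblique-derivative Schauder theory of \cite{D91,C96,U1}. No substantive differences to report.
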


\subsection{Optimal transportation with general costs}

The regularity of an optimal transport map with general costs has been studied by many researchers. 
In \cite{MTW}, Ma, Trudinger, and Wang found a fourth order condition, the so-called \emph{MTW condition}, of the cost function, which ensures the smoothness of the map. 
When the cost does not satisfy the MTW condition, but is a small perturbation of the quadratic cost, various regularity results have been obtained in \cite{Chen, CF, CF1}, see also \cite{CGN, DF}. 

We remark that the proofs of our Theorems \ref{ptheorem} and \ref{2ptheorem} also allow a small perturbation of the cost function. 
To be specific, assume that the cost function $c=c(x,y)$ satisfies
\begin{itemize}
\item[({\bf C0})] The cost function is of class $C^3$ with $\|c\|_{C^3(\Lambda\times\Lambda^*)}<\infty$.
\item[({\bf C1})] $\forall x\in\Lambda$, the map $\Lambda^*\ni y\mapsto D_xc(x,y)\in\R^n$ is injective. 
\item[({\bf C2})] $\forall y\in\Lambda^*$, the map $\Lambda\ni x\mapsto D_yc(x,y)\in\R^n$ is injective. 
\item[({\bf C3})] $\det(D_{xy}c)(x,y)\neq0$ for all $(x,y)\in \Lambda\times\Lambda^*$,
\end{itemize}
and 
\beq\label{pcost}
\|c - x\cdot y\|_{C^2(\Lambda\times\Lambda^*)} \leq \delta_1.
\eeq
The conclusion of Theorems \ref{ptheorem} and \ref{2ptheorem} remains true provided $\delta+\delta_1<\delta_0$ is sufficiently small, where $\delta$ is the perturbation of domains $\Lambda, \Lambda^*$ from convex domains, and $\delta_1$ is the perturbation of cost from the quadratic cost.

\vskip10pt

\end{document}